\definecolor{darkblue}{rgb}{0.0, 0.0, 0.55}
\definecolor{bordeaux}{rgb}{0.34, 0.01, 0.1}
\newtheorem{theorem}{Theorem}[section]
\newtheorem{lemma}[theorem]{Lemma}
\newtheorem{definition}[theorem]{Definition}
\newtheorem{example}[theorem]{Example}
\newtheorem{proposition}[theorem]{Proposition}
\newtheorem{remark}[theorem]{Remark}
\numberwithin{equation}{section}
\def\Z{{\mathbb{Z}}}
\def\R{{\mathbb{R}}}
\def\CC{{\mathbb{C}}}
\def\N{{\mathbb{N}}}
\def\x{{\mathbf{x}}}
\def\y{{\mathbf{y}}}
\def\a{{\boldsymbol{\alpha}}}
\def\b{{\boldsymbol{\beta}}}
\def\g{{\boldsymbol{\gamma}}}
\def\up{{\boldsymbol{\upsilon}}}
\newcommand{\br}{\mathbf{r}}
\newcommand{\bs}{\mathbf{s}}
\def\A{{\mathscr{A}}}
\def\B{{\mathscr{B}}}
\def\C{{\mathscr{C}}}
\def\RR{{\mathscr{R}}}
\def\supp{\hbox{\rm{supp}}}
\def\var{\hbox{\rm{var}}}
\def\Tr{\hbox{\rm{Tr}}}
\def\int{\hbox{\rm{int}}}
\newcommand{\vge}{\mathbin{\rotatebox[origin=c]{90}{$\ge$}}}
\newif\ifcomment
\newcommand{\revision}[1]{{{\color{black}#1}}}
\begin{document}

%\title[Combining correlative and term sparsity for large-scale POPs]{Combining correlative and term sparsity for large-scale polynomial optimization}

\title[Combining correlative and term sparsity for large-scale POPs]{CS-TSSOS: Correlative and term sparsity for large-scale polynomial optimization}
\author[J. Wang \and V. Magron \and J.B. Lasserre \and N. H. A. Mai]{Jie Wang \and Victor Magron \and Jean B. Lasserre \and Ngoc Hoang Anh Mai}
\subjclass[2010]{Primary, 14P10,90C22; Secondary, 12D15,12Y05}
\keywords{moment-SOS hierarchy, Lasserre's hierarchy, correlative sparsity, term sparsity, TSSOS, large-scale polynomial optimization, optimal power flow}
\if{
\begin{CCSXML}
<ccs2012>
<concept>
<concept_id>10003752.10003809.10003716.10011138.10010042</concept_id>
<concept_desc>Theory of computation~Semidefinite programming</concept_desc>
<concept_significance>500</concept_significance>
</concept>
<concept>
<concept_id>10003752.10003809.10003716.10011138.10010043</concept_id>
<concept_desc>Theory of computation~Convex optimization</concept_desc>
<concept_significance>500</concept_significance>
</concept>
<concept>
<concept_id>10003752.10003809.10003636.10003815</concept_id>
<concept_desc>Theory of computation~Numeric approximation algorithms</concept_desc>
<concept_significance>500</concept_significance>
</concept>
</ccs2012>
\end{CCSXML}

\ccsdesc[500]{Theory of computation~Semidefinite programming}
\ccsdesc[500]{Theory of computation~Convex optimization}
\ccsdesc[500]{Theory of computation~Numeric approximation algorithms}
}\fi

\date{\today}

\begin{abstract}
This work proposes a new moment-SOS hierarchy, called \emph{CS-TSSOS}, for solving large-scale sparse polynomial optimization problems. Its novelty is to exploit simultaneously \emph{correlative sparsity} and \emph{term sparsity} 
by combining advantages of two existing frameworks for sparse polynomial optimization.
The former is due to Waki et al. \cite{waki} while the latter was initially proposed by Wang et al. \cite{wang} and later exploited in the TSSOS hierarchy \cite{wang3,wang2}.
In doing so we obtain CS-TSSOS -- a two-level hierarchy of semidefinite programming relaxations with 
(i), the crucial property to involve blocks of SDP matrices and (ii), the guarantee of convergence to the global optimum under certain conditions.
We demonstrate its efficiency and scalability on several large-scale instances of the celebrated Max-Cut problem and the important industrial optimal power flow problem, involving up to six thousand variables and tens of thousands of constraints.
\end{abstract}

\maketitle

\section{Introduction}
This paper is concerned with solving large-scale polynomial optimization problems. As is often the case, the polynomials in the problem description involve only a few monomials of low degree and the ultimate goal is to exploit this crucial feature to provide 
semidefinite relaxations that are computationally much cheaper than those of the standard SOS-based hierarchy \cite{Las01} or its sparse version \cite{Las06,waki} based on correlative sparsity.

Throughout the paper, we consider large-scale instances of the following polynomial optimization problem (POP):
\begin{equation}\label{sec2-eq9}
(\textrm{Q}):\quad \rho^*=\inf_{\x}\,\{\,f(\x) : \x\in\mathbf{K}\,\},
\end{equation}
where the objective function $f$ is assumed to be a polynomial in $n$ variables $\x= (x_1,\ldots,x_n)$ and the feasible set $\mathbf{K}\subseteq\R^{n}$ is assumed to be defined by a finite conjunction of $m$ polynomial inequalities, namely
\begin{equation}\label{sec2-eq10}
\mathbf{K} := \{\x\in\R^{n} : g_1(\x)\ge 0, \dots, g_m(\x) \geq 0 \},
\end{equation}
for some polynomials $g_1,\dots, g_m$ in $\x$.
Here ``large-scale'' means that the magnitude of the number of variables $n$ and the number of inequalities $m$ can be both proportional to several thousands. 
A nowadays well-established scheme to handle $(\textrm{Q})$ is the \emph{moment-SOS hierarchy}  \cite{Las01}, where SOS is the abbreviation of \emph{sum of squares}. 
The moment-SOS hierarchy provides a sequence of semidefinite programming (SDP) relaxations, whose optimal values are non-decreasing lower bounds of the global optimum $\rho^*$ of $(\textrm{Q})$. 
Under some mild assumption slightly stronger than compactness, the sequence generically converges to the global optimum in finitely many steps \cite{nie2014optimality}. 
SDP solvers \cite{wolkowicz2012handbook} address a specific class of convex optimization problems, with linear cost and linear matrix inequalities. 
With a priory fixed precision, an SDP can be solved in polynomial time with respect to its input size.
Modern SDP solvers via the interior-point method (e.g.~{\tt Mosek} \cite{andersen2000mosek}) can solve an SDP problem involving matrices of moderate size (say, $\le5,000$) and equality constraints of moderate number (say, $\le20,000$) in reasonable time on a standard laptop \cite{toh}.
The SDP relaxations arising from the moment-SOS hierarchy typically involve matrices of size $\binom{n+d}{d}$ and equality constraints of number $\binom{n+2d}{2d}$, where $d$ is the relaxation order. 
For problems with $n \simeq 200$, it is thus possible to compute the first-order SDP relaxation of a quadratically constrained quadratic problem (QCQP), as one can take $d=1$, yielding $\binom{n+d}{d} \simeq 200$ and $\binom{n+2d}{2d} \simeq 20,000$ (in this case, this relaxation is also known as Shor's relaxation \cite{shor1987quadratic}).
However, the quality of the resulting approximation is often not satisfactory and it is then required to go beyond the first-order relaxation. But for solving the second-order relaxation ($d=2$) one is limited to problems of small size, typically with $\binom{n+4}{4}\le 20,000$ (hence with $n\le 24$) on a standard laptop.
% For the second-order relaxation with $n\simeq 200$, the matrix size $\binom{n+2}{2} \simeq 20,000$ and the number of equality constraints $\binom{n+4}{4} \simeq 7\times10^7$ of the resulting program is out of reach for modern SDP solvers. 
Therefore, in view of the current state of SDP solvers, the dense moment-SOS hierarchy does not scale well enough.

One possible remedy is to rely on alternative weaker positivity certificates, such as the hierarchy of linear programming (LP) relaxations based on Krivine-Stengle's certificates \cite{krivineanneaux, stengle1974nullstellensatz,lasserre2017bounded} or the second-order cone programming (SOCP) relaxation based on (scaled) diagonally dominant sums of squares (DSOS/SDSOS) \cite{ahmadi2019dsos}
to approximate/bound from below the optimum of $(\textrm{Q})$. 
Even though modern LP/SOCP solvers can handle much larger problems by comparison with SDP solvers, they have been shown to provide less accurate bounds, in particular for combinatorial problems \cite{laurent2003comparison}, and do not have the property of finite convergence for continuous problems (not even for convex QCQP problems \cite[Section 9.3]{lasserre-cup2015}). Another important methodology is to reduce the size of SDPs arising in the moment-SOS hierarchy via exploiting structure of POPs.

~

\paragraph{\textbf{Related work for unconstrained POPs}}
A first option is to exploit \emph{term sparsity} for sparse unconstrained problems, i.e.~when $\textbf{K} = \R^n$, $f$ involves a few terms (monomials).
The algorithm consists of automatically reducing the size of the corresponding SDP matrix by eliminating the monomial terms which never appear among the support of SOS decompositions \cite{re2}.
Other classes of positivity certificates have been recently developed with a specific focus on sparse unconstrained problems. 
Instead of trying to decompose a positive polynomial as an SOS, one can try to decompose it as a sum of nonnegative circuits (SONC), by solving a geometric program \cite{iliman2016amoebas} or a second-order cone program \cite{averkov2019optimal,wang2019second}, or alternatively as a sum of arithmetic-geometric-mean-exponentials (SAGE) \cite{chandrasekaran2016relative} by solving a relative entropy program.
Despite their potential efficiency on certain sub-classes of POPs (e.g., sparse POPs with a small number of variables and a high degree), these methods share the common drawback of not providing systematic guarantees of convergence for constrained problems. 
% Since numerous large-scale real-world problems, such as optimal power flow instances, involve equality or inequality constraints, it is mandatory to design more suitable efficient relaxation schemes.
%

~

\paragraph{\textbf{Related work on correlative sparsity}}
In order to reduce the computational burden associated with the dense moment-SOS hierarchy while keeping its nice convergence properties, one possibility is to take into account the sparsity pattern satisfied by the variables of the POP \cite{Las06,waki}.  
The resulting algorithm has been implemented in the {\tt SparsePOP} solver \cite{waki2008algorithm} and can handle sparse problems with up to several hundred variables.
Many applications of interest have been successfully handled thanks to this framework, for instance certified roundoff error bounds in computer arithmetics \cite{toms17,toms18} with up to several hundred variables and constraints, optimal power flow problems \cite{josz2018lasserre} (where a multi-ordered Lasserre hierarchy was proposed) with up to several thousand variables and constraints. 
More recent extensions have been developed for volume computation of sparse semialgebraic sets \cite{tacchi2019exploiting}, approximating regions of attraction of sparse polynomial systems \cite{tacchi2019approximating}, noncommutative POPs \cite{klep2019sparse}, Lipschitz constant estimation of deep networks \cite{chen2020polynomial} and for sparse positive definite functions \cite{mai2020sparse}.
In these applications, the cost polynomial and the constraint polynomials possess a specific \textit{correlative sparsity pattern}.
The resulting sparse moment-SOS hierarchy is obtained by building blocks of SDP matrices with respect to some subsets or \emph{cliques} of the input variables.
When the sizes of these cliques are reasonably small, one can expect to handle problems with a large number of variables. 
For instance, the maximal size of cliques is less than $10$ for some unconstrained problems in \cite{waki} or roundoff error problems in \cite{toms17}, and is less than $20$ for the optimal power flow problems handled in \cite{josz2018lasserre}. 
Even though correlative sparsity has been successfully used to tackle several interesting applications, there are still many POPs that cannot be handled by considering merely correlative sparsity. For instance, there are POPs for which the correlative sparsity pattern is (nearly) dense or which admits a correlative sparsity pattern with variable cliques of large cardinality (say, $>20$), yielding untractable SDPs.

~

\paragraph{\textbf{Related work on term sparsity}}

To overcome these issues, one can exploit 
\emph{term sparsity} as described in \cite{wang,wang2,wang3}.
The \emph{TSSOS hierarchy} from \cite{wang2} as well as the complementary \emph{Chordal-TSSOS} from \cite{wang3} offers some alternative to problems for which the correlative sparsity pattern is dense or nearly dense. 
In both TSSOS and Chordal-TSSOS frameworks a so-called {\em term sparsity pattern (tsp) graph} is associated with the POP. 
The nodes of this tsp graph are monomials (from a monomial basis) needed to construct SOS relaxations of the POP. 
Two nodes are connected via an edge whenever the product of the corresponding monomials appears in the supports of polynomials involved in the POP or is a monomial square.
% the associated moment matrix or the SOS decomposition in the moment-SOS hierarchy. 
Note that this graph differs from the \emph{correlative sparsity pattern (csp) graph} used in \cite{waki} where the nodes are the input variables and the edges connect two nodes whenever the corresponding variables appear in the same term of the objective function or in the same constraint.
A two-step iterative algorithm takes as input the tsp graph and enlarges it to exploit the term sparsity in (\textrm{Q}). 
Each iteration consists of two successive operations: 
(i) a support extension operation and (ii) 
either a block closure operation on adjacency matrices in the case of TSSOS \cite{wang2} or a chordal extension operation in the case of Chordal-TSSOS \cite{wang3}.
In doing so one obtains a two-level moment-SOS hierarchy with blocks of SDP matrices. If the sizes of blocks are relatively small then the resulting SDP relaxations become more tractable as their computational cost is significantly reduced.
Another interesting feature of TSSOS is that the block structure obtained at the end of the iterative algorithm automatically induces a partition of the monomial basis, which can be interpreted in terms of sign symmetries of the initial POP. 
% Sign symmetries were previously investigated in \cite{lo1}.   
TSSOS and Chordal-TSSOS allow one to solve POPs with several hundred variables for which there is no or little correlative sparsity to exploit; see \cite{wang2,wang3} for numerous numerical examples.
One can also rely on symmetry exploitation as in \cite{riener2013exploiting} but this requires quite strong assumptions on the input data, such as invariance of each polynomial $f, g_1,\dots,g_m$ under the action of a finite group.
%If a POP is sparse in the sense of correlative sparsity, which means that the csp graph is not complete, then it must be sparse in the sense of term sparsity, which means that the tsp graph is not complete, while the converse is not necessarily true.

To tackle large-scale POPs, a natural idea is to simultaneously benefit from correlative and term sparsity patterns. This is the spirit of our contribution. Also in the same vein the work in \cite{miller2019decomposed} combines the (S)DSOS framework \cite{ahmadi2019dsos} with the TSSOS hierarchy but does not provide systematic convergence guarantees.

~

\paragraph{\textbf{Contribution}} 

Our main contribution is as follows:\\
%
%\begin{itemize}[noitemsep,nolistsep]
%\item 

$\bullet$ For large-scale POPs with a correlative sparsity pattern, we first apply the usual sparse polynomial optimization framework~\cite{Las06,waki} to get a coarse decomposition in terms of cliques of variables.
Next we apply the term sparsity strategy (either TSSOS or Chordal-TSSOS) to each subsystem (which involves 
only one clique of variables) to reduce the size of SDPs even further. While the overall strategy is quite clear and simple, its implementation
is not trivial and needs some care. Indeed for its coherency one needs to take extra care of the monomials 
which involve variables that belong to intersections of variable cliques (those obtained from correlative sparsity). 
The resulting combination of correlative sparsity (CS for short) and term sparsity produces what we call the \emph{CS-TSSOS} hierarchy -- a two-level hierarchy of SDP relaxations with blocks of SDP matrices, which yields a converging sequence of certified approximations for POPs.
Under certain conditions, we prove that the corresponding sequence of optimal values converges to the global optimum of the POP.
% In contrast with other techniques exploiting sparsity, we obtain tighter bounds, while overcoming even deeper scalability issues.
%
%\item

$\bullet$ Our algorithmic development of the CS-TSSOS hierarchy is fully implemented in the \texttt{TSSOS} tool \cite{magron2021tssos}. 
The most recent version of \texttt{TSSOS} has been released within the Julia programming language, which is freely available online and documented.\footnote{\url{https://github.com/wangjie212/TSSOS}}
With \texttt{TSSOS}, the accuracy and scalability of the CS-TSSOS hierarchy are evaluated on several large-scale benchmarks coming from the continuous and combinatorial optimization literature. 
In particular, numerical experiments demonstrate that the CS-TSSOS hierarchy is able to handle challenging Max-Cut instances and optimal power flow instances with several thousand ($\simeq6,000$) variables on a laptop whenever appropriate sparsity patterns are accessible. 
We remark that the CS-TSSOS framework has been recently extended to handle noncommutative polynomial optimization \cite{wang2020exploiting} and complex polynomial optimization \cite{wang2021exploiting}.
%
%\end{itemize}

The rest of the paper is organized as follows: in Section~\ref{sec:prelim}, we provide preliminary background on SOS polynomials, the moment-SOS hierarchy, correlative sparsity and the (Chordal-)TSSOS hierarchy. 
In Section~\ref{sec:cstssos}, we explain 
how to combine correlative sparsity and term sparsity
to obtain a two-level CS-TSSOS hierarchy. 
Its convergence is analyzed in Section~\ref{sec:cvg}. 
Eventually, we provide numerical experiments for large-scale POP instances in Section~\ref{sec:benchs}. Discussions and conclusions are made in Section~\ref{conc}.

\section{Notation and Preliminaries}
\label{sec:prelim}
\subsection{Notation and SOS polynomials}\label{SOS}
Let $\x=(x_1,\ldots,x_n)$ be a tuple of variables and $\R[\x]=\R[x_1,\ldots,x_n]$ be the ring of real $n$-variate polynomials. For $d\in\N$, the set of polynomials of degree no more than $2d$ is denoted by $\R_{2d}[\x]$. A polynomial $f\in\R[\x]$ can be written as $f(\x)=\sum_{\a\in\A}f_{\a}\x^{\a}$ with $\A\subseteq\N^n$ and $f_{\a}\in\R, \x^{\a}=x_1^{\alpha_1}\cdots x_n^{\alpha_n}$. The support of $f$ is defined by $\supp(f):=\{\a\in\A\mid f_{\a}\ne0\}$. We use $|\cdot|$ to denote the cardinality of a set. For a finite set $\A\subseteq\N^n$, let $\x^{\mathscr{A}}$ be the $|\mathscr{A}|$-dimensional column vector consisting of elements $\x^{\a},\a\in\mathscr{A}$ (fix any ordering on $\N^n$). For a positive integer $r$, the set of $r\times r$ symmetric matrices is denoted by $\mathbf{S}^r$ and the set of $r\times r$ positive semidefinite (PSD) matrices is denoted by $\mathbf{S}_+^r$. \revision{A matrix $A\in\mathbf{S}_+^r$ is written as $A\succeq0$. For matrices $A,B\in\mathbf{S}^r$, let $\langle A, B\rangle\in\R$ denote the trace inner-product, defined by $\langle A, B\rangle=\Tr(A^TB)$, and let $A\circ B\in\mathbf{S}^r$ denote the Hadamard product, defined by $[A\circ B]_{ij} = A_{ij}B_{ij}$. For $d\in\N$, let $\N^n_d:=\{\a=(\alpha_i)_{i=1}^n\in\N^n\mid\sum_{i=1}^n\alpha_i\le d\}$.
% For $\b=(\beta_i)_i\in\N^n$, let $\supp(\b):=\{i\in[n]\mid\beta_i\ne0\}$.
For $\b=(\beta_i)\in\N^n,\g=(\gamma_i)\in\N^n$, let $\b+\g:=(\beta_i+\gamma_i)\in\N^n$. For $\a\in\N^n,\A,\B\subseteq\N^n$,
let $\a+\B:=\{\a+\b\mid\b\in\B\}$ and $\A+\B:=\{\a+\b\mid\a\in\A,\b\in\B\}$. For $m\in\N\backslash\{0\}$, let $[m]:=\{1,2,\ldots,m\}$.}

Given a polynomial $f(\x)\in\R[\x]$, if there exist polynomials $f_1(\x),\ldots,f_t(\x)$ such that $f(\x)=\sum_{i=1}^tf_i(\x)^2$,
then we call $f(\x)$ a {\em sum of squares (SOS)} polynomial. The set of SOS polynomials is denoted by $\Sigma[\x]$. Assume that $f\in\Sigma_{2d}[\x]:=\Sigma[\x]\cap\R_{2d}[\x]$ and $\x^{\N^n_{d}}$ is the {\em standard monomial basis}. Then the SOS condition for $f$ is equivalent to the existence of a PSD matrix $Q$, which is called a {\em Gram matrix} \cite{re2}, such that $f=(\x^{\N^n_{d}})^TQ\x^{\N^n_{d}}$.
For convenience, we abuse notation in the sequel and denote by $\N^n_{d}$ instead of $\x^{\N^n_{d}}$ the standard monomial basis and use the exponent $\a$ to represent a monomial $\x^{\a}$.

\subsection{The moment-SOS hierarchy for POPs}\label{moment-SOS}
With $\y=(y_{\a})_{\a}$ being a sequence indexed by the standard monomial basis $\N^n$ of $\R[\x]$, let $L_{\y}:\R[\x]\rightarrow\R$ be the linear functional
\begin{equation*}
f=\sum_{\a}f_{\a}\x^{\a}\mapsto L_{\y}(f)=\sum_{\a}f_{\a}y_{\a}.
\end{equation*}
For $d\in\N$, the {\em moment matrix} $M_{d}(\y)$ of order $d$ associated with $\y$ is the matrix with rows and columns indexed by the standard monomial basis $\N^n_{d}$ such that
\begin{equation*}
M_d(\y)_{\b\g}:=L_{\y}(\x^{\b}\x^{\g})=y_{\b+\g}, \quad\forall\b,\g\in\N^n_{d}.
\end{equation*}

Suppose $g=\sum_{\a}g_{\a}\x^{\a}\in\R[\x]$ and let $\y=(y_{\a})$ be given. The {\em localizing matrix} $M_{d}(g\y)$ of order $d$ associated with $g$ and $\y$ is the matrix with rows and columns indexed by $\N^n_{d}$ such that
\begin{equation*}
M_{d}(g\,\y)_{\b\g}:=L_{\y}(g\,\x^{\b}\x^{\g})=\sum_{\a}g_{\a}y_{\a+\b+\g}, \quad\forall\b,\g\in\N^n_{d}.
\end{equation*}

Consider the POP ($\textrm{Q}$) defined by \eqref{sec2-eq9} and \eqref{sec2-eq10}. Throughout the paper let $d_j:=\lceil\deg(g_j)/2\rceil,j=1,\ldots,m$ and $d_{\min}:=\max\{\lceil\deg(f)/2\rceil,d_1,\ldots,d_m\}$. Then the moment hierarchy for ($\textrm{Q}$) indexed by integer $d\ge d_{\min}$ is defined by (\cite{Las01}):
\begin{equation}\label{sec2-eq11}
(\textrm{Q}_{d}):\quad
\begin{cases}
\inf& L_{\y}(f)\\
\textrm{s.t.}&M_{d}(\y)\succeq0,\\
&M_{d-d_j}(g_j\y)\succeq0,\quad j=1,\ldots,m,\\
&y_{\mathbf{0}}=1.
\end{cases}
\end{equation}
We call $d$ the {\em relaxation order}. 

For the sake of convenience, we set $g_0:=1$ and $d_0:=0$ throughout the paper. For each $j$, writing $M_{d-d_j}(g_j\y)=\sum_{\a}D_{\a}^jy_{\a}$ for appropriate symmetry matrices $\{D_{\a}^j\}$, the dual of \eqref{sec2-eq11} reads as
\begin{equation}\label{sec2-eq12}
(\textrm{Q}_{d})^*:\quad
\begin{cases}
\sup&\rho\\
\textrm{s.t.}&\displaystyle\sum_{j=0}^m\langle Q_j,D_{\a}^j\rangle+\rho\delta_{\mathbf{0}\a}=f_{\a},\quad\forall\a\in\N^n_{2d},\\
&Q_j\succeq0,\quad j=0,\ldots,m,
\end{cases}
\end{equation}
where $\delta_{\mathbf{0}\a}$ is the usual Kronecker symbol.

\subsection{Chordal graphs and sparse matrices}
In this subsection, we recall some basic results on chordal graphs and sparse matrices which are crucial for our subsequent development. 

An {\em (undirected) graph} $G(V,E)$ or simply $G$ consists of a set of nodes $V$ and a set of edges $E\subseteq\{\{v_i,v_j\}\mid v_i\ne v_j,(v_i,v_j)\in V\times V\}$. For a graph $G$, we use $V(G)$ and $E(G)$ to indicate the node set of $G$ and the edge set of $G$, respectively. \revision{The {\em adjacency matrix} of a graph $G$ is denoted by $B_G$ for which we put ones on its diagonal.} For two graphs $G,H$, we say that $G$ is a {\em subgraph} of $H$, denoted by $G\subseteq H$, if both $V(G)\subseteq V(H)$ and $E(G)\subseteq E(H)$ hold.
\begin{definition}
A graph is called a {\em chordal graph} if all its cycles of length at least four have a chord\footnote{A chord is an edge that joins two nonconsecutive nodes in a cycle.}.
\end{definition}
The notion of chordal graphs plays an important role in sparse matrix theory. Any non-chordal graph $G(V,E)$ can be always extended to a chordal graph $G'(V,E')$ by adding appropriate edges to $E$, which is called a {\em chordal extension} of $G(V,E)$. \revision{As an example, in Figure \ref{ce} the two dashed edges are added to obtain a chordal extension.} The chordal extension of $G$ is usually not unique and the symbol $G'$ is used to represent any specific chordal extension of $G$ throughout the paper. For graphs $G\subseteq H$, we assume that $G'\subseteq H'$ always holds in this paper. 

\revision{\begin{figure}[htbp]
\caption{An example of chordal extension}\label{ce}
\begin{center}
{\tiny
\begin{tikzpicture}[every node/.style={circle, draw=blue!50, thick, minimum size=8mm}]
\node (n1) at (-2,0) {$1$};
\node (n2) at (0,0) {$2$};
\node (n3) at (2,0) {$3$};
\node (n4) at (-2,-2) {$4$};
\node (n5) at (0,-2) {$5$};
\node (n6) at (2,-2) {$6$};
\draw (n1)--(n4);
\draw (n1)--(n2);
\draw (n2)--(n3);
\draw (n2)--(n5);
\draw (n3)--(n6);
\draw (n4)--(n5);
\draw (n5)--(n6);
\draw[dashed] (n1)--(n5);
\draw[dashed] (n2)--(n6);
\end{tikzpicture}}
\end{center}
\end{figure}
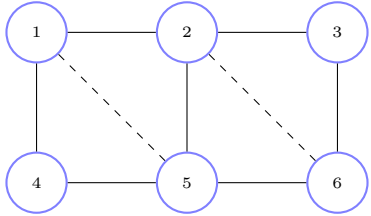}

\revision{A {\em complete graph} is a graph in which any two nodes have an edge.
A {\em clique} of a graph is a subset of nodes that induces a complete subgraph. A {\em maximal clique} is a clique that is not contained in any other clique.} It is known that for a chordal graph, its maximal cliques can be enumerated efficiently in linear time in terms of the number of nodes and edges. See e.g.\,\cite{bp,fg,go} for the details.

From now on we consider graphs with the node set $V\subseteq\N^n$.
Given a graph $G(V,E)$, a symmetric matrix $Q$ with rows and columns indexed by $V$ is said to have sparsity pattern $G$ if $Q_{\b\g}=Q_{\g\b}=0$ whenever $\b\ne\g$ and $\{\b,\g\}\notin E$. Let $\mathbf{S}_G$ be the set of symmetric matrices with sparsity pattern $G$. \revision{Matrices in $\mathbf{S}_G$ possess a {\em block structure}: each block is indexed by a maximal clique of $G$. The maximal size of blocks is the maximal size of maximal cliques of $G$, namely, the \emph{clique number} of $G$.}

\begin{remark}\label{rm-max}
For a graph $G$, among all chordal extensions of $G$, there is a particular one $G'$ which makes every connected component of $G$ to be a complete subgraph. \revision{Accordingly, the matrix with sparsity pattern $G'$ is block diagonal (after an appropriate permutation on rows and columns): each block corresponds to a connected component of $G$}. We call this chordal extension the {\em maximal} chordal extension.
In this paper, we only consider chordal extensions that are subgraphs of the maximal chordal extension. 
\end{remark}

Given a graph $G(V,E)$, the PSD matrices with sparsity pattern $G$ form a convex cone
\begin{equation}\label{sec2-eq5}
\mathbf{S}_+^{|V|}\cap\mathbf{S}_G=\{Q\in\mathbf{S}_G\mid Q\succeq0\}.
\end{equation}
Once the sparsity pattern graph $G(V,E)$ is a chordal graph, the cone $\mathbf{S}_+^{|V|}\cap\mathbf{S}_G$ can be
decomposed as a sum of simple convex cones thanks to the following theorem and hence the related optimization problem can be solved more efficiently.
\begin{theorem}[\cite{agler}, Theorem 2.3]\label{sec2-thm}
Let $G(V,E)$ be a chordal graph and assume that $C_1,\ldots,C_t$ are the list of maximal cliques of $G(V,E)$. Then a matrix $Q\in\mathbf{S}_+^{|V|}\cap\mathbf{S}_G$ if and only if $Q$ can be written as $Q=\sum_{i=1}^tQ_{i}$, where $Q_i\in\mathbf{S}_+^{|V|}$ has nonzero entries only with row and column indices coming from $C_i$ for $i=1,\ldots,t$.
\end{theorem}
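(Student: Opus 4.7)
The implication $(\Leftarrow)$ is essentially free: any sum of PSD matrices is PSD, and whenever $\b\ne\g$ with $\{\b,\g\}\notin E$ the nodes $\b,\g$ cannot lie in a common clique $C_i$ (cliques are complete), so each summand $Q_i$ vanishes at $(\b,\g)$, hence so does $Q$. I would dispose of this in one line and focus on $(\Rightarrow)$.

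For $(\Rightarrow)$ the plan is induction on the number $t$ of maximal cliques of $G$, exploiting the fact that every chordal graph admits a clique tree satisfying the running intersection property. The base case $t=1$ is trivial: $G$ is complete, $\mathbf{S}_G=\mathbf{S}^{|V|}$, and we set $Q_1:=Q$. For the inductive step I would pick a leaf $C_t$ of the clique tree with neighbor $C_s$ and let $S:=C_t\cap C_s$, $B:=C_t\setminus S$, $A:=V\setminus C_t$. Running intersection forces every node of $B$ to appear only in the clique $C_t$, so such a node has no edge in $G$ to any node of $A$; combined with $Q\in\mathbf{S}_G$, this gives $Q_{AB}=0$ and the block form
\begin{equation*}
Q=\begin{pmatrix} Q_{AA} & 0 & Q_{AS} \\ 0 & Q_{BB} & Q_{BS} \\ Q_{SA} & Q_{SB} & Q_{SS}\end{pmatrix}.
\end{equation*}
The goal is to peel off a summand $Q_t\succeq0$ supported on $C_t=B\cup S$ so that the remainder $\widetilde Q:=Q-Q_t$ is PSD, supported on $A\cup S$, and compatible with the sparsity pattern of the induced subgraph $G[V\setminus B]$. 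That subgraph is chordal with exactly $t-1$ maximal cliques $C_1,\ldots,C_{t-1}$, and induction finishes the job.

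The main technical obstacle is constructing the split so that both $Q_t$ and $\widetilde Q$ are PSD simultaneously. I would use a Schur-complement trick: set $X:=Q_{SB}Q_{BB}^{+}Q_{BS}$, define
\begin{equation*}
Q_t:=\begin{pmatrix} 0 & 0 & 0 \\ 0 & Q_{BB} & Q_{BS} \\ 0 & Q_{SB} & X\end{pmatrix},\qquad \widetilde Q:=\begin{pmatrix} Q_{AA} & 0 & Q_{AS} \\ 0 & 0 & 0 \\ Q_{SA} & 0 & Q_{SS}-X\end{pmatrix}.
\end{equation*}
The nontrivial $(B\cup S)$-block of $Q_t$ factors as $\bigl(\begin{smallmatrix}Q_{BB}^{1/2}\\Q_{SB}Q_{BB}^{+/2}\end{smallmatrix}\bigr)\bigl(\begin{smallmatrix}Q_{BB}^{1/2}\\Q_{SB}Q_{BB}^{+/2}\end{smallmatrix}\bigr)^{T}$, hence $Q_t\succeq0$; the range condition $Q_{BS}\in\mathrm{range}(Q_{BB})$ needed to make this clean is free because $Q\succeq0$ implies the $(B\cup S)$-principal submatrix of $Q$ is PSD. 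For $\widetilde Q$ I would apply the block congruence with $\bigl(\begin{smallmatrix}I & 0 & 0\\0 & I & -Q_{BB}^{+}Q_{BS}\\0 & 0 & I\end{smallmatrix}\bigr)$ to $Q$; this preserves PSDness, zeroes out the $B$--$S$ coupling, and leaves the $(A\cup S)$-principal block equal to $\widetilde Q$ restricted to $A\cup S$, which is therefore PSD.

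It then remains to verify the sparsity bookkeeping, and this is where I would be careful: the $(S,S)$-block of $\widetilde Q$ gains the rank-correction $-X$, which may have nonzero entries off the original edge set of $G[S]$; however, $S\subseteq C_s$ is a clique of $G$ so every pair in $S$ is already an edge, and $\widetilde Q$ lies in $\mathbf{S}_{G[V\setminus B]}$ as required. I would conclude by invoking the inductive hypothesis on $\widetilde Q$ to get $\widetilde Q=\sum_{i=1}^{t-1}Q_i$ with the claimed support, so that $Q=\sum_{i=1}^{t}Q_i$ is the desired decomposition. The delicate point throughout is the combinatorial fact that removing a simplicial leaf clique keeps the graph chordal with one fewer maximal clique; everything else is linear algebra.
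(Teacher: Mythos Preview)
The paper does not prove this theorem; it is quoted from \cite{agler} and used as a black box, so there is no in-paper argument to compare against. Your proof is correct and is essentially the classical clique-tree induction with a Schur-complement peel-off that underlies the original Agler--Helton--McCullough--Rodman result (and, dually, the Grone--Johnson--S\'a--Wolkowicz completion theorem cited here as Theorem~\ref{sec2-thm2}).

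The only step I would make explicit is the combinatorial claim that $G[V\setminus B]$ has maximal cliques exactly $C_1,\ldots,C_{t-1}$: a maximal clique $C$ of the induced subgraph is a clique of $G$, hence lies in some $C_j$; if $j=t$ then $C\subseteq C_t\cap(V\setminus B)=S\subsetneq C_s$, contradicting maximality of $C$ in $G[V\setminus B]$, while if $j\ne t$ then $C_j\subseteq V\setminus B$ (since vertices of $B$ occur only in $C_t$) forces $C=C_j$. Everything else---the range condition from PSDness of the $(B\cup S)$-block, the factorization of $Q_t$, the congruence yielding $\widetilde Q\succeq0$, and the observation that $S$ is a clique so the correction $-X$ respects the sparsity pattern---is clean.
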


Given a graph $G(V,E)$, let $\Pi_{G}$ be the projection from $\mathbf{S}^{|V|}$ to the subspace $\mathbf{S}_G$, i.e., for $Q\in\mathbf{S}^{|V|}$,
\begin{equation}\label{sec2-eq7}
\Pi_{G}(Q)_{\b\g}=\begin{cases}
Q_{\b\g}, &\textrm{if }\b=\g\textrm{ or }\{\b,\g\}\in E,\\
0, &\textrm{otherwise}.
\end{cases}
\end{equation}
\revision{The set $\Pi_{G}(\mathbf{S}_+^{|V|})$ denotes matrices in $\mathbf{S}_G$ that have a PSD completion in the sense that
diagonal entries and off-diagonal entries corresponding to edges of $G$ are fixed; other off-diagonal entries are free}. More precisely, 
\begin{equation}\label{sec2-eq8}
\Pi_{G}(\mathbf{S}_+^{|V|})=\{\Pi_{G}(Q)\mid Q\in\mathbf{S}_+^{|V|}\}.
\end{equation}
One can easily check that the PSD completable cone $\Pi_{G}(\mathbf{S}_+^{|V|})$ and the PSD cone $\mathbf{S}_+^{|V|}\cap\mathbf{S}_G$ form a pair of dual cones in $\mathbf{S}_G$. Moreover, for a chordal graph $G$, the decomposition result for matrices in $\mathbf{S}_+^{|V|}\cap\mathbf{S}_G$ given in Theorem \ref{sec2-thm} leads to the following characterization of matrices in the PSD completable cone $\Pi_{G}(\mathbf{S}_+^{|V|})$.
\begin{theorem}[\cite{grone1984}, Theorem 7]\label{sec2-thm2}
Let $G(V,E)$ be a chordal graph and assume that $C_1,\ldots,C_t$ are the list of maximal cliques of $G(V,E)$. Then a matrix $Q\in\Pi_{G}(\mathbf{S}_+^{|V|})$ if and only if $Q[C_i]\succeq0$ for $i=1,\ldots,t$, where $Q[C_i]$ denotes the principal submatrix of $Q$ indexed by the clique $C_i$.
\end{theorem}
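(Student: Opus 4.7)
The plan is to deduce Theorem~\ref{sec2-thm2} from Theorem~\ref{sec2-thm} by dualising inside the subspace $\mathbf{S}_G$, using the duality $(\mathbf{S}_+^{|V|}\cap\mathbf{S}_G)^{*} = \Pi_G(\mathbf{S}_+^{|V|})$ already recorded just above the statement. The strategy splits naturally into a short necessity direction and a conic-duality argument for sufficiency.

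For necessity, the plan is quick. Suppose $Q=\Pi_G(\tilde Q)$ with $\tilde Q\succeq 0$. Since $C_i$ is a clique of $G$, every off-diagonal pair $\{\b,\g\}$ of distinct indices from $C_i$ is an edge of $G$, so $\Pi_G$ leaves the block $\tilde Q[C_i]$ unchanged and $Q[C_i]=\tilde Q[C_i]$ is PSD as a principal submatrix of a PSD matrix. Chordality is not needed here.

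For sufficiency, the plan is to introduce, for each maximal clique $C_i$, the cone
\begin{equation*}
K_i := \{M\in\mathbf{S}_+^{|V|} : M_{\b\g}=0 \text{ whenever } \{\b,\g\}\not\subseteq C_i\}.
\end{equation*}
Each $K_i$ lies in $\mathbf{S}_G$, and Theorem~\ref{sec2-thm} reads exactly $\mathbf{S}_+^{|V|}\cap\mathbf{S}_G = K_1+\cdots+K_t$. Taking duals inside $\mathbf{S}_G$ and using the identity $\bigl(\sum_i K_i\bigr)^{*}=\bigcap_i K_i^{*}$ yields
\begin{equation*}
\Pi_G(\mathbf{S}_+^{|V|}) = (\mathbf{S}_+^{|V|}\cap\mathbf{S}_G)^{*} = \bigcap_{i=1}^t K_i^{*}.
\end{equation*}
The final step identifies $K_i^{*}$ inside $\mathbf{S}_G$: for $Q\in\mathbf{S}_G$ and $M\in K_i$, the support condition forces $\langle Q,M\rangle=\langle Q[C_i],M[C_i]\rangle$, and the self-duality of $\mathbf{S}_+^{|C_i|}$ yields $K_i^{*} = \{Q\in\mathbf{S}_G : Q[C_i]\succeq 0\}$. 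Intersecting over $i$ recovers the desired characterisation.

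The main obstacle will be the routine but nontrivial conic-duality bookkeeping: one must verify that $K_1+\cdots+K_t$ is closed (so that the cone equals its bidual and the dual-of-sum identity is valid), and ensure that ``dual'' is computed consistently inside $\mathbf{S}_G$ rather than the ambient $\mathbf{S}^{|V|}$, with $\Pi_G$ translating between the two pictures. Both points are standard in finite-dimensional conic analysis but should be spelled out carefully. Chordality enters the argument only through Theorem~\ref{sec2-thm}, precisely where $\mathbf{S}_+^{|V|}\cap\mathbf{S}_G$ is decomposed as a sum of clique cones; everything else is general conic duality.
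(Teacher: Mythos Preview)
The paper does not supply its own proof of Theorem~\ref{sec2-thm2}; the result is quoted from \cite{grone1984} without argument, so there is nothing to compare against. Your duality-based derivation is correct and is in fact the standard route in the sparse-SDP literature: the paper already records, just before the statement, that $\Pi_G(\mathbf{S}_+^{|V|})$ and $\mathbf{S}_+^{|V|}\cap\mathbf{S}_G$ form a dual pair in $\mathbf{S}_G$, and Theorem~\ref{sec2-thm} gives the clique decomposition $\mathbf{S}_+^{|V|}\cap\mathbf{S}_G=\sum_i K_i$; dualising and identifying each $K_i^*$ via self-duality of the small PSD cones yields exactly the block-PSD characterisation. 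One minor over-caution in your write-up: the identity $\bigl(\sum_i K_i\bigr)^{*}=\bigcap_i K_i^{*}$ holds for arbitrary convex cones with no closedness hypothesis, so the ``obstacle'' you flag about closedness of $\sum_i K_i$ is not actually needed for the direction you use (closedness matters only for the bidual step, which you do not invoke).
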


\revision{By Theorem \ref{sec2-thm2}, to check $Q\in\Pi_{G}(\mathbf{S}_+^{|V|})$, it suffices to check the positive semidefiniteness of certain blocks of $Q$.} For more details on chordal graphs and sparse matrices, the reader may refer to \cite{va}.

\subsection{Correlative sparsity}\label{cs}
To exploit correlative sparsity in the moment-SOS hierarchy for POPs, one proceeds
in two steps: 1) partition the set of variables into cliques according to the links between variables emerging in the input polynomial system, and 2) construct a sparse moment-SOS hierarchy with respect to the former partition of variables \cite{waki}.

More concretely, we define the {\em correlative sparsity pattern (csp) graph} associated with POP \eqref{sec2-eq9} to be the graph $G^{\textrm{csp}}$ with nodes $V=[n]$ and edges $E$ satisfying $\{i,j\}\in E$ if one of following holds:
\begin{enumerate}
    \item[(i)] there exists $\a\in\supp(f)\textrm{ s.t. }\alpha_i>0,\alpha_j>0$;
    \item[(ii)] there exists $k\in[m]$ such that $x_i,x_j\in\var(g_k)$, where $\var(g_k)$ is the set of variables involved in $g_k$.
\end{enumerate}
Let $(G^{\textrm{csp}})'$ be a chordal extension of $G^{\textrm{csp}}$ and $\{I_l\}_{l=1}^p$ be the list of maximal cliques of $(G^{\textrm{csp}})'$ with $n_l:=|I_l|$. Let $\R[\x(I_l)]$ denote the ring of polynomials in the $n_l$ variables $\x(I_l) = \{x_i\mid i\in I_l\}$. We then partition the constraint polynomials $g_1,\ldots,g_m$ into groups $\{g_j\mid j\in J_l\}, l=1,\ldots,p$ which satisfy:
\begin{enumerate}
    \item[(i)] $J_1,\ldots,J_p\subseteq[m]$ are pairwise disjoint and $\cup_{l=1}^pJ_l=[m]$;
    \item[(ii)] for any $j\in J_l$, $\var(g_j)\subseteq I_l$, $l=1,\ldots,p$.
\end{enumerate}

Next, with $l\in\{1,\ldots, p\}$ fixed, for $d\in\N$ and $g\in\R[\x(I_l)]$, let $M_d(\y, I_l)$ (resp. $M_d(g\y, I_l)$)
be the moment (resp. localizing) submatrix obtained from $M_d(\y)$ (resp. $M_d(g\y)$) by retaining only those rows and columns indexed by $\b=(\beta_i)\in\N_d^n$ of $M_d(\y)$ (resp. $M_d(g\y)$) with $\supp(\b)\subseteq I_l$, where $\supp(\b):=\{i\mid \beta_i\ne0\}$.

Then with $d\ge d_{min}$, the moment hierarchy based on correlative sparsity for POP \eqref{sec2-eq9} is defined as:
\begin{equation}\label{sec2-eq13}
(\textrm{Q}^{\textrm{cs}}_{d}):\quad
\begin{cases}
\inf &L_{\y}(f)\\
\textrm{s.t.}&M_{d}(\y, I_l)\succeq0,\quad l=1,\ldots,p,\\
&M_{d-d_j}(g_j\y, I_l)\succeq0,\quad j\in J_l, l=1\,\ldots,p,\\
&y_{\mathbf{0}}=1,
\end{cases}
\end{equation}
with optimal value denoted by $\rho_{d}$. In the following, we refer to $(\textrm{Q}^{\textrm{cs}}_{d})$ \eqref{sec2-eq13} as the {\em CSSOS} hierarchy for POP \eqref{sec2-eq9}.

\begin{remark}
As shown in \cite{Las06} under some compactness assumption, the sequence $(\rho_{d})_{d\ge d_{min}}$ monotonically converges to the global optimum $\rho^*$ of POP \eqref{sec2-eq9}.
\end{remark}

\subsection{Term sparsity}\label{ts}
In contrast to the correlative sparsity pattern which focuses on links between \emph{variables}, the term sparsity pattern focuses on links between \emph{monomials} (or terms). To
exploit term sparsity in the moment-SOS hierarchy one also proceeds in two steps: 1) partition each involved monomial basis into blocks according to the links between monomials emerging in the input polynomial system, and 2) construct a sparse moment-SOS hierarchy with respect to the former partition of monomial bases \cite{wang2,wang3}.

More concretely, let
$\mathscr{A} = \supp(f)\cup\bigcup_{j=1}^m\supp(g_j)$ and $\N^n_{d-d_j}$ be the standard monomial basis for $j=0,\ldots,m$. Fixing a relaxation order $d\ge d_{min}$, we define the {\em term sparsity pattern (tsp) graph} associated with POP \eqref{sec2-eq9} or the support set $\A$, to be the graph $G_{d}^{\textrm{tsp}}$ with node set $V_{d,0}:=\N^n_{d}$ and edge set
\begin{equation}\label{sec2-eq14}
E:=\{\{\b,\g\}\mid\b\ne\g\in V, \b+\g\in\A\cup(2\N)^n\},
\end{equation}
\revision{where $(2\N)^n:=\{2\a\mid\a\in\N^n\}.$

For a graph $G(V,E)$ with $V\subseteq\N^n$, let $\supp(G):=\{\b+\g\mid\{\b,\g\}\in E\}$.
Assume that $G_{d,0}^{(0)}=G_{d}^{\textrm{tsp}}$ and $G_{d,j}^{(0)}$ with node set $V_{d,j}:=\N^n_{d-d_j}$ is an empty graph (i.e., with empty edge set) for $j=1,\ldots,m$.
Now for each $j\in\{0\}\cup[m]$, we iteratively define an ascending chain of graphs $(G_{d,j}^{(k)}(V_{d,j},E_{d,j}^{(k)}))_{k\ge1}$. To this end, we start with the initial graph $G_{d,j}^{(0)}$ and each iteration consists of two successive operations:\\
1) {\bf support extension}: Define $F_{d,j}^{(k)}$ to be the graph with nodes $V_{d,j}$ and with
\begin{align}\label{sec2-eq15}
E(F_{d,j}^{(k)})=&\{\{\b,\g\}\mid\b\ne\g\in V_{d,j},\\
&(\b+\g+\supp(g_j))\cap(\cup_{i=0}^m\supp(G_{d,i}^{(k-1)}))\ne\emptyset\},\quad j\in\{0\}\cup[m].\notag
\end{align}
2) {\bf chordal extension}: Let
\begin{equation}\label{sec2-graph2}
G_{d,j}^{(k)}:=(F_{d,j}^{(k)})',\quad j\in\{0\}\cup[m].
\end{equation}
To summarise, the iterative process is
\begin{equation*}
G_{d,j}^{(0)}\rightarrow\cdots\rightarrow G_{d,j}^{(k-1)}\xrightarrow{\textrm{support extension}}F_{d,j}^{(k)}\xrightarrow{\textrm{chordal extension}}G_{d,j}^{(k)}\rightarrow\cdots,
\end{equation*}
for each $j\in\{0\}\cup[m]$.

\begin{example}[support extension]
Assume $m=0$ and consider the graph $G$ with solid edges shown in Figure \ref{ts-ex}. Then by support extension, the two dashed edges are added to $G$ for $x_1x_2x_3\in\supp(G)$.

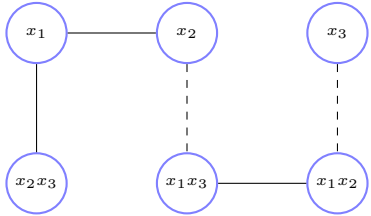
\begin{figure}[htbp]
\caption{The support extension of $G$}\label{ts-ex}
\begin{center}
{\tiny
\begin{tikzpicture}[every node/.style={circle, draw=blue!50, thick, minimum size=8mm}]
\node (n1) at (-2,0) {$x_1$};
\node (n2) at (0,0) {$x_2$};
\node (n3) at (2,0) {$x_3$};
\node (n4) at (-2,-2) {$x_2x_3$};
\node (n5) at (0,-2) {$x_1x_3$};
\node (n6) at (2,-2) {$x_1x_2$};
\draw (n1)--(n4);
\draw[dashed] (n2)--(n5);
\draw[dashed] (n3)--(n6);
\draw (n1)--(n2);
\draw (n5)--(n6);
\end{tikzpicture}}
\end{center}
\end{figure}

\end{example}}

Let $r_j:=|\N^n_{d-d_j}|=\binom{n+d-d_j}{d-d_j},j=0,\ldots,m$. Then with $d\ge d_{min}$ and $k\ge1$, the moment hierarchy based on term sparsity for POP \eqref{sec2-eq9} is defined as:
\begin{equation}\label{sec2-eq16}
(\textrm{Q}^{\textrm{ts}}_{d,k}):\quad
\begin{cases}
\inf &L_{\y}(f)\\
\textrm{s.t.}&B_{G_{d,0}^{(k)}}\circ M_{d}(\y)\in\Pi_{G_{d,0}^{(k)}}(\mathbf{S}_+^{r_0}),\\
&B_{G_{d,j}^{(k)}}\circ M_{d-d_j}(g_j\y)\in\Pi_{G_{d,j}^{(k)}}(\mathbf{S}_+^{r_j}),\quad j=1,\ldots,m,\\
&y_{\mathbf{0}}=1.
\end{cases}
\end{equation}
We call $k$ the {\em sparse order} and in the remainder of this paper, the {\em TSSOS} hierarchy for POP \eqref{sec2-eq9}
refers to the hierarchy $(\textrm{Q}^{\textrm{ts}}_{d,k})$.
%\eqref{sec2-eq16} by the {\em TSSOS} hierarchy for the POP \eqref{sec2-eq9}.

\begin{remark}
In $(\emph{Q}^{\emph{ts}}_{d,k})$, one has the freedom to choose a specific chordal extension for any involved graph $G_{d,j}^{(k)}$. As shown in \cite{wang2}, if one chooses the maximal chordal extension then with $d$ fixed, the resulting sequence of optimal values of the TSSOS hierarchy (as $k$ increases) monotonically converges in finitely many steps to the optimal value of the corresponding dense moment relaxation for POP \eqref{sec2-eq9}.
\end{remark}

%\section{The CS-TSSOS Hierarchy: Unconstrained case}\label{sec3}

\section{The CS-TSSOS Hierarchy}
\label{sec:cstssos}
When applicable, one can significantly improve the scalability of the moment-SOS hierarchy by exploiting correlative sparsity or term sparsity.
For large-scale POPs, it is then natural to ask whether one can combine correlative sparsity and term sparsity to further reduce the size of SDPs involved in the moment-SOS hierarchy and to improve its scalability even more. As we shall see in the following sections, the answer is affirmative.

\subsection{The CS-TSSOS Hierarchy for general POPs}\label{sec4-1}
Let us continue considering POP \eqref{sec2-eq9}\footnote{
Though we only include inequality constraints in the definition of $\mathbf{K}$ \eqref{sec2-eq10} for the sake of simplicity, equality constraints can be treated in a similar way.}. 
A first natural idea to combine correlative sparsity and term sparsity would be to apply the TSSOS hierarchy for each subsystem (involving one variable clique) \emph{separately}, once the cliques have been obtained from the csp graph of POP \eqref{sec2-eq9}.
However, with this naive approach convergence may be lost and in the following we take extra care to avoid this annoying consequence.

% Hence we will proceed in a little more delicate way. 
% We first apply the correlative sparsity pattern to \eqref{sec2-eq9}. 
Let $G^{\textrm{csp}}$ be the csp graph associated with POP \eqref{sec2-eq9}, $(G^{\textrm{csp}})'$ a chordal extension of $G^{\textrm{csp}}$ and $\{I_l\}_{l=1}^p$ be the list of maximal cliques of $(G^{\textrm{csp}})'$ with $n_l:=|I_l|$. As in Section~\ref{cs}, the set of variables $\x$ is partitioned into $\x(I_1), \x(I_2), \ldots, \x(I_p)$. Let $J_1,\ldots,J_p$ be defined as in Section~\ref{cs}.

Now we apply the term sparsity pattern to each subsystem involving variables $\x(I_l)$, $l=1,\ldots,p$ respectively as follows. Let
\begin{equation}\label{sec4-eq0}
    \mathscr{A} := \supp(f)\cup\bigcup_{j=1}^m\supp(g_j)\textrm{ and }
    \mathscr{A}_l := \{\a\in\A\mid\supp(\a)\subseteq I_l\}
\end{equation}
for $l=1,\ldots,p$. As before, we set $d_{\min}:= \max\{\lceil\deg(f)/2\rceil,d_1,\ldots,d_m\}$, $d_0:=0$ and $g_0:=1$. Fix a relaxation order $d\ge d_{\min}$ and let $\N^{n_l}_{d-d_j}$ be the standard monomial basis for $j\in\{0\}\cup J_l,l=1\ldots,p$. Let $G_{d,l}^{\textrm{tsp}}$ be the tsp graph with nodes $\N^{n_l}_{d}$ associated with $\A_l$ defined as in Section~\ref{ts}. Note that we embed $\N^{n_l}$ into $\N^{n}$ via the map
$\a=(\alpha_i)\in\N^{n_l}\mapsto\a'=(\alpha'_i)\in\N^{n}$ which satisfies 
\begin{equation*}
    \alpha'_i=\begin{cases}
    \alpha_i,\quad\textrm{if } i\in I_l,\\
    0,\quad\,\,\,\textrm{otherwise. }
    \end{cases}
\end{equation*}

\revision{Let us assume that $G_{d,l,0}^{(0)}=G_{d,l}^{\textrm{tsp}}$ and $G_{d,l,j}^{(0)},j\in J_l, l=1,\ldots,p$ are all empty graphs. Next for each $j\in\{0\}\cup J_l,l=1,\ldots,p$, we iteratively define an ascending chain of graphs $(G_{d,l,j}^{(k)}(V_{d,l,j},E_{d,l,j}^{(k)}))_{k\ge1}$ with $V_{d,l,j}:=\N^{n_l}_{d-d_j}$ via two successive operations:\\
1) {\bf support extension}: Define $F_{d,l,j}^{(k)}$ to be the graph with nodes $V_{d,l,j}$ and with
\begin{equation}\label{sec4-eq1}
E(F_{d,l,j}^{(k)})=\{\{\b,\g\}\mid\b\ne\g\in V_{d,l,j},(\b+\g+\supp(g_j))\cap\C_{d}^{(k-1)}\ne\emptyset\},
\end{equation}
where
\begin{equation}\label{sec4-eq11}
    \C_{d}^{(k-1)}:=\bigcup_{l=1}^p(\cup_{j\in \{0\}\cup J_l}(\supp(g_j)+\supp(G_{d,l,j}^{(k-1)}))).
\end{equation}
2) {\bf chordal extension}: Let
\begin{equation}\label{sec4-graph}
G_{d,l,j}^{(k)}:=(F_{d,l,j}^{(k)})',\quad j\in\{0\}\cup J_l,l=1,\ldots,p.
\end{equation}}

\begin{example}\label{ex1}
Let $f=1+x_1^2+x_2^2+x_3^2+x_1x_2+x_2x_3+x_3$ and consider the unconstrained POP: $\min\{f(\x): \x\in\R^n\}$. We have $n=3,m=0$ and $d=d_{\min}=1$. The variables are partitioned into two cliques: $\{x_1,x_2\}$ and $\{x_2,x_3\}$. The tsp graphs with respect to these two cliques are illustrated in Figure \ref{ex1-1}. The left graph corresponds to the first clique: $x_1$ and $x_2$ are connected because of the term $x_1x_2$. The right graph corresponds to the second clique: $1$ and $x_3$ are connected because of the term $x_3$; $x_2$ and $x_3$ are connected because of the term $x_2x_3$. If we apply the TSSOS hierarchy (using the maximal chordal extension in \eqref{sec4-graph}) separately for each clique, then the graph sequences $(G_{1,l}^{(k)})_{k\ge1},l=1,2$ (the subscript $j$ is omitted here since there is no constraint) stabilize at $k=1$. However, the added (dashed) edge in the right graph corresponds to the monomial $x_2$, which only involves the variable $x_2$ belonging to the first clique. Hence we need to add the edge connecting $1$ and $x_2$ to the left graph in order to get the guarantee of convergence as we shall see in Section~\ref{glo-con}. Consequently, the graph sequences $(G_{1,l}^{(k)})_{k\ge1},l=1,2$ stabilize at $k=2$.

\begin{figure}[htbp]
\caption{The tsp graphs of Example \ref{ex1}. The dashed edge is added after the maximal chordal extension.}\label{ex1-1}
\begin{center}
\begin{minipage}{0.3\linewidth}
{\tiny
\begin{tikzpicture}[every node/.style={circle, draw=blue!50, thick, minimum size=8mm}]
\node (n1) at (90:1) {$1$};
\node (n2) at (330:1) {$x_2$};
\node (n3) at (210:1) {$x_1$};
\draw (n2)--(n3);
\end{tikzpicture}}
\end{minipage}
\begin{minipage}{0.3\linewidth}
{\tiny
\begin{tikzpicture}[every node/.style={circle, draw=blue!50, thick, minimum size=8mm}]
\node (n1) at (90:1) {$1$};
\node (n2) at (330:1) {$x_3$};
\node (n3) at (210:1) {$x_2$};
\draw[dashed] (n1)--(n3);
\draw (n1)--(n2);
\draw (n2)--(n3);
\end{tikzpicture}}
\end{minipage}
\end{center}
\end{figure}
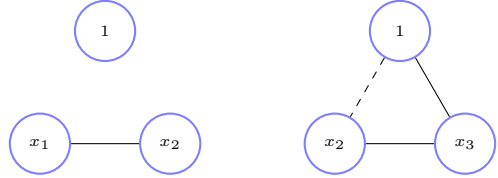
\end{example}

Let $r_{l,j}:=|\N^{n_l}_{d-d_j}|=\binom{n_l+d-d_j}{d-d_j}$ for all $l,j$. Then with $k\ge1$, the moment hierarchy based on correlative-term sparsity for POP \eqref{sec2-eq9} is defined as:
\begin{equation}\label{sec4-eq2}
(\textrm{Q}^{\textrm{cs-ts}}_{d,k}):\quad
\begin{cases}
\inf&L_{\y}(f)\\
\textrm{s.t.}&B_{G_{d,l,0}^{(k)}}\circ M_{d}(\y, I_l)\in\Pi_{G_{d,l,0}^{(k)}}(\mathbf{S}_+^{r_{l,0}}),\quad l=1,\ldots,p,\\
&B_{G_{d,l,j}^{(k)}}\circ M_{d-d_j}(g_j\y, I_l)\in\Pi_{G_{d,l,j}^{(k)}}(\mathbf{S}_+^{r_{l,j}}),\quad j\in J_l,l=1,\ldots,p,\\
&y_{\mathbf{0}}=1,
\end{cases}
\end{equation}
with optimal value denoted by $\rho^{(k)}_{d}$.

\begin{proposition}\label{sec4-prop1}
Fixing a relaxation order $d\ge d_{\min}$, the sequence $(\rho^{(k)}_{d})_{k\ge1}$ is monotonically non-decreasing and $\rho^{(k)}_{d}\le\rho_{d}$ for all $k$.
\end{proposition}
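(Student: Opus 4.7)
The plan is to prove both claims by exhibiting inclusions of feasible sets, since $(\textrm{Q}^{\textrm{cs-ts}}_{d,k})$ is a minimisation problem. The key technical ingredient is a short monotonicity lemma for the PSD-completable cone: if $G \subseteq H$ are graphs on the same vertex set $V$ and $A$ is a symmetric matrix indexed by $V$, then $B_H \circ A \in \Pi_H(\mathbf{S}_+^{|V|})$ implies $B_G \circ A \in \Pi_G(\mathbf{S}_+^{|V|})$. Indeed, any PSD completion $P$ of $B_H \circ A$ satisfies $P_{\b\b}=A_{\b\b}$ for every $\b \in V$ and $P_{\b\g}=A_{\b\g}$ for every $\{\b,\g\} \in E(H)$; since $E(G) \subseteq E(H)$ the same $P$ also serves as a PSD completion of $B_G \circ A$. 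In particular, taking $H$ complete on $V$, a PSD matrix $A$ already satisfies $B_G \circ A \in \Pi_G(\mathbf{S}_+^{|V|})$ for every $G$.

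Next I would verify that the iteratively constructed graphs form ascending chains, i.e., $G_{d,l,j}^{(k-1)} \subseteq G_{d,l,j}^{(k)}$ for every $l,j$ and every $k \ge 1$. Because $G_{d,l,j}^{(k)} = (F_{d,l,j}^{(k)})'$ and chordal extensions only add edges, it suffices to show $G_{d,l,j}^{(k-1)} \subseteq F_{d,l,j}^{(k)}$. For any edge $\{\b,\g\} \in E(G_{d,l,j}^{(k-1)})$, one has $\b+\g \in \supp(G_{d,l,j}^{(k-1)})$ and hence $\b+\g+\supp(g_j) \subseteq \supp(g_j) + \supp(G_{d,l,j}^{(k-1)}) \subseteq \C_d^{(k-1)}$ by \eqref{sec4-eq11}; by \eqref{sec4-eq1} this means $\{\b,\g\} \in E(F_{d,l,j}^{(k)})$. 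The base case $k=1$ is handled in the same way: for $j=0$ using $\supp(g_0)=\{\mathbf{0}\}$ together with $G_{d,l,0}^{(0)}=G_{d,l}^{\textrm{tsp}}$, while for $j \in J_l$ the graph $G_{d,l,j}^{(0)}$ is empty and the inclusion is trivial.

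Both conclusions then follow immediately. For monotonicity, any $\y$ feasible for $(\textrm{Q}^{\textrm{cs-ts}}_{d,k+1})$ is converted into a feasible point of $(\textrm{Q}^{\textrm{cs-ts}}_{d,k})$ by applying the lemma with $A = M_{d-d_j}(g_j\y,I_l)$, $G = G_{d,l,j}^{(k)}$ and $H = G_{d,l,j}^{(k+1)}$ to each constraint, giving $\rho_d^{(k)} \le \rho_d^{(k+1)}$. For the upper bound, any $\y$ feasible for $(\textrm{Q}^{\textrm{cs}}_d)$ satisfies $M_{d-d_j}(g_j\y,I_l)\succeq 0$, and the specialised form of the lemma (with $H$ complete on $V_{d,l,j}$) immediately yields the $k$-th CS-TSSOS constraints, whence $\rho_d^{(k)} \le \rho_d$. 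The only step deserving real attention is the ascending-chain verification, since one must track how the supports of the graphs at iteration $k-1$ are aggregated into $\C_d^{(k-1)}$ across all cliques; the rest is a one-line invocation of the PSD completion lemma and does not even use the moment-matrix structure of $M_{d-d_j}(g_j\y,I_l)$ beyond its being symmetric.
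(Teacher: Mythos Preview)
Your argument is correct and follows the same overall architecture as the paper: establish the graph inclusions $G_{d,l,j}^{(k)}\subseteq G_{d,l,j}^{(k+1)}$, then deduce that feasibility in the larger problem implies feasibility in the smaller one. The paper states the inclusion ``by construction'' without further detail; you spell out the verification $G_{d,l,j}^{(k-1)}\subseteq F_{d,l,j}^{(k)}$ explicitly, which is a welcome addition.

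The one genuine methodological difference lies in how the PSD-completion monotonicity is justified. The paper invokes Theorem~\ref{sec2-thm2}: since each $G_{d,l,j}^{(k)}$ is chordal, membership in $\Pi_{G}(\mathbf{S}_+^{|V|})$ is equivalent to positive semidefiniteness of the principal submatrices on the maximal cliques, and the observation that every maximal clique of $G_{d,l,j}^{(k)}$ sits inside a maximal clique of $G_{d,l,j}^{(k+1)}$ then gives the implication. Your route is more elementary: you argue directly from the definition \eqref{sec2-eq8} that a PSD completion witnessing $B_H\circ A\in\Pi_H(\mathbf{S}_+^{|V|})$ is automatically a completion witnessing $B_G\circ A\in\Pi_G(\mathbf{S}_+^{|V|})$ whenever $G\subseteq H$. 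This bypasses Theorem~\ref{sec2-thm2} entirely, does not require chordality, and makes the proof self-contained. The paper's route, on the other hand, ties the argument to the clique decomposition that is actually used when the SDP is implemented, so it is closer to the computational picture. Both are valid and short; yours is the cleaner abstract argument.
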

\begin{proof}
By construction, we have $G_{d,l,j}^{(k)}\subseteq G_{d,l,j}^{(k+1)}$ for all $d,l,j$ and all $k$. It follows that each maximal clique of $G_{d,l,j}^{(k)}$ is a subset of some maximal clique of $G_{d,l,j}^{(k+1)}$. Hence by Theorem \ref{sec2-thm2}, $(\textrm{Q}^{\textrm{cs-ts}}_{d,k})$ is a relaxation of $(\textrm{Q}^{\textrm{cs-ts}}_{d,k+1})$ and is clearly also a relaxation of $(\textrm{Q}^{\textrm{cs}}_{d})$. Therefore, $(\rho^{(k)}_{d})_{k\ge1}$ is monotonically non-decreasing and $\rho^{(k)}_{d}\le\rho_{d}$ for all $k$.
\end{proof}

\begin{proposition}\label{sec4-prop2}
Fixing a sparse order $k\ge 1$, the sequence $(\rho^{(k)}_{d})_{d\ge d_{min}}$ is monotonically non-decreasing.
\end{proposition}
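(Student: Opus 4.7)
The plan is to show that any feasible solution of $(\textrm{Q}^{\textrm{cs-ts}}_{d+1,k})$ yields, by natural truncation to indices of degree at most $2d$, a feasible solution of $(\textrm{Q}^{\textrm{cs-ts}}_{d,k})$ with the same objective value. Since the objective function $L_{\y}(f)$ depends only on $y_{\a}$ with $\a\in\supp(f)\subseteq\N^n_{2d_{\min}}\subseteq\N^n_{2d}$, the equality of objective values and the normalization $y_{\mathbf{0}}=1$ are immediate. The substance of the argument is therefore the preservation of the PSD completability constraints under truncation.

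The first key step is to establish the graph containment
\[
G_{d,l,j}^{(k)}\subseteq G_{d+1,l,j}^{(k)}\quad\text{for every }l,j,
\]
where containment is understood both for node sets ($V_{d,l,j}=\N^{n_l}_{d-d_j}\subseteq\N^{n_l}_{d+1-d_j}=V_{d+1,l,j}$) and for edge sets. I would prove this by induction on $k$. For $k=0$, this is immediate from the definitions of $G_{d,l}^{\textrm{tsp}}$ and of the empty graphs for $j\in J_l$. For the inductive step, the induction hypothesis gives $\supp(G_{d,l,j}^{(k-1)})\subseteq\supp(G_{d+1,l,j}^{(k-1)})$, hence $\C_d^{(k-1)}\subseteq\C_{d+1}^{(k-1)}$ by \eqref{sec4-eq11}, so by \eqref{sec4-eq1} every edge of $F_{d,l,j}^{(k)}$ is an edge of $F_{d+1,l,j}^{(k)}$. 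Then the convention adopted in the paper that $G\subseteq H$ implies $G'\subseteq H'$ for the chordal extensions yields $G_{d,l,j}^{(k)}\subseteq G_{d+1,l,j}^{(k)}$.

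The second step exploits Theorem \ref{sec2-thm2}: PSD completability with respect to a chordal graph amounts to positive semidefiniteness of the principal submatrices indexed by its maximal cliques. Let $\y$ be feasible for $(\textrm{Q}^{\textrm{cs-ts}}_{d+1,k})$ and define $\y'$ as its restriction to indices $\a\in\N^n_{2d}$. Fix any maximal clique $C$ of $G_{d,l,j}^{(k)}$. By the graph containment from step one, $C$ is a clique of $G_{d+1,l,j}^{(k)}$, hence contained in some maximal clique $\tilde{C}$ of $G_{d+1,l,j}^{(k)}$. Since entries of $B_{G_{d+1,l,j}^{(k)}}\circ M_{d+1-d_j}(g_j\y,I_l)$ indexed by pairs in the clique $\tilde C$ coincide with the corresponding entries of $M_{d+1-d_j}(g_j\y,I_l)$, Theorem \ref{sec2-thm2} applied at level $d+1$ gives $M_{d+1-d_j}(g_j\y,I_l)[\tilde{C}]\succeq 0$, whence its principal submatrix indexed by $C$ is also PSD. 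A direct index check shows that for $\b,\g\in C\subseteq\N^{n_l}_{d-d_j}$ the required moment $y'_{\a+\b+\g}$ with $\a\in\supp(g_j)$ has degree at most $2d$, so $M_{d-d_j}(g_j\y',I_l)[C]$ coincides with $M_{d+1-d_j}(g_j\y,I_l)[C]$, and it is again PSD. Applying Theorem \ref{sec2-thm2} in the reverse direction to the chordal graph $G_{d,l,j}^{(k)}$ gives the required membership for each $l$ and $j\in\{0\}\cup J_l$, establishing feasibility and the inequality $\rho_d^{(k)}\le\rho_{d+1}^{(k)}$.

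The main conceptual obstacle is the graph-theoretic step: without the convention $G\subseteq H\Rightarrow G'\subseteq H'$ on chordal extensions, the induced subgraph of $G_{d+1,l,j}^{(k)}$ on $V_{d,l,j}$ need not contain $G_{d,l,j}^{(k)}$, and the clique-inclusion argument that drives the PSD inheritance could fail. Everything else is a bookkeeping of degrees and a routine application of Theorem \ref{sec2-thm2} in both directions.
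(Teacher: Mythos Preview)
Your proof is correct and follows essentially the same route as the paper: establish the graph containment $G_{d,l,j}^{(k)}\subseteq G_{d+1,l,j}^{(k)}$ by induction on $k$ (using the convention $G\subseteq H\Rightarrow G'\subseteq H'$), and then invoke Theorem~\ref{sec2-thm2} to conclude that $(\textrm{Q}^{\textrm{cs-ts}}_{d,k})$ is a relaxation of $(\textrm{Q}^{\textrm{cs-ts}}_{d+1,k})$. Your second step spells out in detail the clique-inclusion and truncation argument that the paper compresses into a single appeal to Theorem~\ref{sec2-thm2} (together with the analogous reasoning already given in the proof of Proposition~\ref{sec4-prop1}), but this is elaboration rather than a different method.
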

\begin{proof}
The conclusion follows if we can show that $G_{d,l,j}^{(k)}\subseteq G_{d+1,l,j}^{(k)}$ for all $d,l,j,k$ since by Theorem \ref{sec2-thm2} this implies that $(\textrm{Q}^{\textrm{cs-ts}}_{d,k})$ is a relaxation of $(\textrm{Q}^{\textrm{cs-ts}}_{d+1,k})$. Let us prove $G_{d,l,j}^{(k)}\subseteq G_{d+1,l,j}^{(k)}$ by induction on $k$. For $k=1$, from $\eqref{sec2-eq14}$, we have $G_{d,l,0}^{(0)}=G_{d,l}^{\textrm{tsp}}\subseteq G_{d+1,l}^{\textrm{tsp}}=G_{d+1,l,0}^{(0)}$, which together with $\eqref{sec4-eq1}$-$\eqref{sec4-eq11}$ implies that $F_{d,l,j}^{(1)}\subseteq F_{d+1,l,j}^{(1)}$ for $j\in \{0\}\cup J_l,l=1,\ldots,p$. It then follows that $G_{d,l,j}^{(1)}=(F_{d,l,j}^{(1)})'\subseteq (F_{d+1,l,j}^{(1)})'=G_{d+1,l,j}^{(1)}$. Now assume that $G_{d,l,j}^{(k)}\subseteq G_{d+1,l,j}^{(k)}$, $j\in \{0\}\cup J_l,l=1,\ldots,p$, holds for some $k\geq 1$. Then by $\eqref{sec4-eq1}$-$\eqref{sec4-eq11}$ and by the induction hypothesis, we have $F_{d,l,j}^{(k+1)}\subseteq F_{d+1,l,j}^{(k+1)}$ for $j\in \{0\}\cup J_l,l=1,\ldots,p$. Thus $G_{d,l,j}^{(k+1)}=(F_{d,l,j}^{(k+1)})'\subseteq (F_{d+1,l,j}^{(k+1)})'=G_{d+1,l,j}^{(k+1)}$ which completes the induction. 
\end{proof}

From Proposition \ref{sec4-prop1} and Proposition \ref{sec4-prop2}, we deduce the following two-level hierarchy of lower bounds for the optimum $\rho^*$ of $(\textrm{Q})$ \eqref{sec2-eq9}:
\begin{equation}\label{mixhierc}
\begin{matrix}
\rho^{(1)}_{d_{\min}}&\le&\rho^{(2)}_{d_{\min}}&\le&\cdots&\le&\rho_{d_{\min}}\\
\vge&&\vge&&&&\vge\\
\rho^{(1)}_{d_{\min}+1}&\le&\rho^{(2)}_{d_{\min}+1}&\le&\cdots&\le&\rho_{d_{\min}+1}\\
\vge&&\vge&&&&\vge\\
\vdots&&\vdots&&\vdots&&\vdots\\
\vge&&\vge&&&&\vge\\
\rho^{(1)}_{d}&\le&\rho^{(2)}_{d}&\le&\cdots&\le&\rho_{d}\\
\vge&&\vge&&&&\vge\\
\vdots&&\vdots&&\vdots&&\vdots\\
\end{matrix}
\end{equation}
The array of lower bounds \eqref{mixhierc} (and its associated SDP relaxations \eqref{sec4-eq2}) is what we call the {\em CS-TSSOS} hierarchy associated with $(\textrm{Q})$ \eqref{sec2-eq9}.

\begin{example}\label{ex2}
Let $f=1+\sum_{i=1}^6x_i^4+x_1x_2x_3+x_3x_4x_5+x_3x_4x_6+x_3x_5x_6+x_4x_5x_6$, and consider the unconstrained POP: $\min\{f(\x): \x\in\R^n\}$.
We have $n=6,m=0,d=d_{\min}=2$. Let us apply the CS-TSSOS hierarchy (using the maximal chordal extension in \eqref{sec4-graph}) to this problem. First, according to the csp graph (see Figure \ref{ex2-3}), we partition the variables into two cliques: $\{x_1,x_2,x_3\}$ and $\{x_3,x_4,x_5,x_6\}$. Figure \ref{ex2-1} and Figure \ref{ex2-2} illustrate the tsp graphs for the first clique and the second clique respectively. For the first clique 
one obtains four blocks of SDP matrices with respective sizes $4,2,2,2$. For the second clique one obtains two blocks of SDP matrices with respective sizes $5,10$. As a result, the original SDP matrix of size $28$ has been reduced to six blocks of maximal size $10$.

If one applies the TSSOS hierarchy (using the maximal chordal extension in \eqref{sec2-graph2}) directly to the problem (i.e., without partitioning variables), then the tsp graph is illustrated in Figure \ref{ex2-4}.
One obtains five SDP blocks with respective sizes $7,2,2,2,10$. Compared to the CS-TSSOS case, the two blocks with respective sizes $4,5$ are replaced by a single block of size $7$.

\begin{figure}[htbp]
\caption{The csp graph of Example \ref{ex2}}\label{ex2-3}
\begin{center}
{\tiny
\begin{tikzpicture}[every node/.style={circle, draw=blue!50, thick, minimum size=8mm}]
\node (n1) at (-1.732,1) {$1$};
\node (n2) at (-1.732,-1) {$2$};
\node (n3) at (0,0) {$3$};
\node (n4) at (1.414,1.414) {$4$};
\node (n5) at (1.4147,-1.414) {$5$};
\node (n6) at (2.828,0) {$6$};
\draw (n1)--(n2);
\draw (n1)--(n3);
\draw (n2)--(n3);
\draw (n3)--(n4);
\draw (n3)--(n5);
\draw (n3)--(n6);
\draw (n4)--(n5);
\draw (n4)--(n6);
\draw (n5)--(n6);
\end{tikzpicture}}
\end{center}
\end{figure}
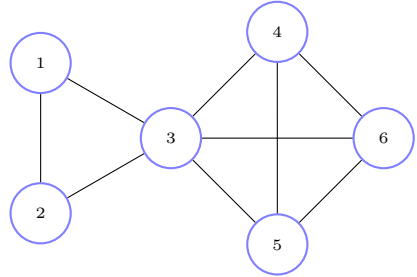

\begin{figure}[htbp]
\caption{The tsp graph for the first clique of Example \ref{ex2}}\label{ex2-1}
\begin{center}
{\tiny
\begin{tikzpicture}[every node/.style={circle, draw=blue!50, thick, minimum size=8mm}]
\node (n1) at (0,0) {$1$};
\node (n8) at (-2,0) {$x_3^2$};
\node (n9) at (-2,-2) {$x_2^2$};
\node (n10) at (0,-2) {$x_1^2$};
\node (n2) at (2,0) {$x_1$};
\node (n3) at (4,0) {$x_2$};
\node (n4) at (6,0) {$x_3$};
\node (n5) at (2,-2) {$x_2x_3$};
\node (n6) at (4,-2) {$x_1x_3$};
\node (n7) at (6,-2) {$x_1x_2$};
\draw (n1)--(n8);
\draw (n1)--(n9);
\draw (n1)--(n10);
\draw (n8)--(n9);
\draw (n8)--(n10);
\draw (n9)--(n10);
\draw (n2)--(n5);
\draw (n3)--(n6);
\draw (n4)--(n7);
\end{tikzpicture}}
\end{center}
\end{figure}
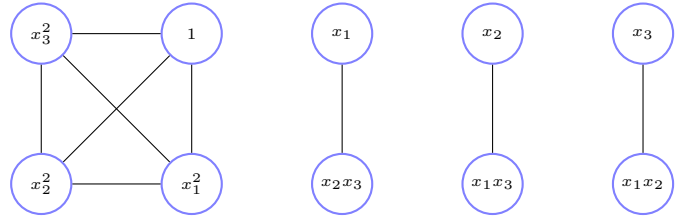

\begin{figure}[htbp]
\caption{The tsp graph for the second clique of Example \ref{ex2}}\label{ex2-2}
\begin{center}
{\tiny
\begin{tikzpicture}[every node/.style={circle, draw=blue!50, thick, minimum size=8mm}]
\node (n1) at (90:2) {$1$};
\node (n2) at (162:2) {$x_6^2$};
\node (n3) at (234:2) {$x_5^2$};
\node (n4) at (306:2) {$x_4^2$};
\node (n5) at (18:2) {$x_3^2$};
\draw (n2)--(n3);
\draw (n2)--(n4);
\draw (n2)--(n5);
\draw (n3)--(n4);
\draw (n3)--(n5);
\draw (n3)--(n1);
\draw (n4)--(n5);
\draw (n4)--(n1);
\draw (n5)--(n1);
\draw (n1)--(n2);
\node[xshift=150] (n6) at (90:2) {$x_3$};
\node[xshift=150] (n7) at (126:2) {$x_5x_6$};
\node[xshift=150] (n8) at (162:2) {$x_4x_6$};
\node[xshift=150] (n9) at (198:2) {$x_4x_5$};
\node[xshift=150] (n10) at (234:2) {$x_3x_6$};
\node[xshift=150] (n12) at (270:2) {$x_3x_5$};
\node[xshift=150] (n13) at (306:2) {$x_3x_4$};
\node[xshift=150] (n14) at (342:2) {$x_6$};
\node[xshift=150] (n15) at (54:2) {$x_4$};
\node[xshift=150] (n16) at (18:2) {$x_5$};
\draw (n6)--(n9);
\draw (n15)--(n12);
\draw (n16)--(n13);
\draw (n6)--(n8);
\draw (n15)--(n10);
\draw (n14)--(n13);
\draw (n6)--(n7);
\draw (n14)--(n12);
\draw (n16)--(n10);
\draw (n15)--(n7);
\draw (n16)--(n8);
\draw (n14)--(n9);
\end{tikzpicture}}
\end{center}
\end{figure}
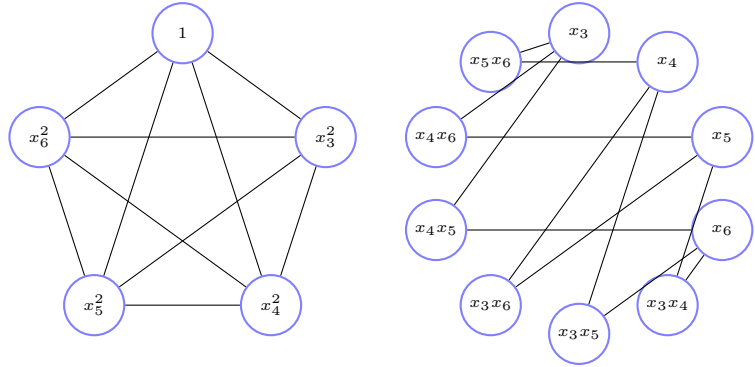 

\begin{figure}[htbp]
\caption{The tsp graph without partitioning variables of Example \ref{ex2}}\label{ex2-4}
\begin{center}
\begin{minipage}{0.4\linewidth}
{\tiny
\begin{tikzpicture}[every node/.style={circle, draw=blue!50, thick, minimum size=8mm}]
\node (n1) at (90:2) {$1$};
\node (n2) at (38.6:2) {$x_1^2$};
\node (n3) at (347:2) {$x_2^2$};
\node (n4) at (295.6:2) {$x_3^2$};
\node (n5) at (244.2:2) {$x_4^2$};
\node (n6) at (192.8:2) {$x_5^2$};
\node (n7) at (141.4:2) {$x_6^2$};
\draw (n2)--(n3);
\draw (n2)--(n4);
\draw (n2)--(n5);
\draw (n3)--(n4);
\draw (n3)--(n5);
\draw (n3)--(n1);
\draw (n4)--(n5);
\draw (n4)--(n5);
\draw (n4)--(n1);
\draw (n5)--(n1);
\draw (n1)--(n2);
\draw (n1)--(n6);
\draw (n2)--(n6);
\draw (n3)--(n6);
\draw (n4)--(n6);
\draw (n5)--(n6);
\draw (n1)--(n7);
\draw (n2)--(n7);
\draw (n3)--(n7);
\draw (n4)--(n7);
\draw (n5)--(n7);
\draw (n6)--(n7);
\end{tikzpicture}}
\end{minipage}%
\begin{minipage}{0.25\linewidth}
{\tiny
\begin{tikzpicture}[every node/.style={circle, draw=blue!50, thick, minimum size=8mm}]
\node (n2) at (1,0) {$x_1$};
\node (n3) at (2,0) {$x_2$};
\node (n4) at (3,0) {$x_3$};
\node (n5) at (1,-2) {$x_2x_3$};
\node (n6) at (2,-2) {$x_1x_3$};
\node (n7) at (3,-2) {$x_1x_2$};
\draw (n2)--(n5);
\draw (n3)--(n6);
\draw (n4)--(n7);
\end{tikzpicture}}
\end{minipage}%
\begin{minipage}{0.42\linewidth}
{\tiny
\begin{tikzpicture}[every node/.style={circle, draw=blue!50, thick, minimum size=8mm}]
\node (n6) at (90:2) {$x_3$};
\node (n7) at (126:2) {$x_5x_6$};
\node (n8) at (162:2) {$x_4x_6$};
\node (n9) at (198:2) {$x_4x_5$};
\node (n10) at (234:2) {$x_3x_6$};
\node (n12) at (270:2) {$x_3x_5$};
\node (n13) at (306:2) {$x_3x_4$};
\node (n14) at (342:2) {$x_6$};
\node (n15) at (54:2) {$x_4$};
\node (n16) at (18:2) {$x_5$};
\draw (n6)--(n9);
\draw (n15)--(n12);
\draw (n16)--(n13);
\draw (n6)--(n8);
\draw (n15)--(n10);
\draw (n14)--(n13);
\draw (n6)--(n7);
\draw (n14)--(n12);
\draw (n16)--(n10);
\draw (n15)--(n7);
\draw (n16)--(n8);
\draw (n14)--(n9);
\end{tikzpicture}}
\end{minipage}
\end{center}
\end{figure}
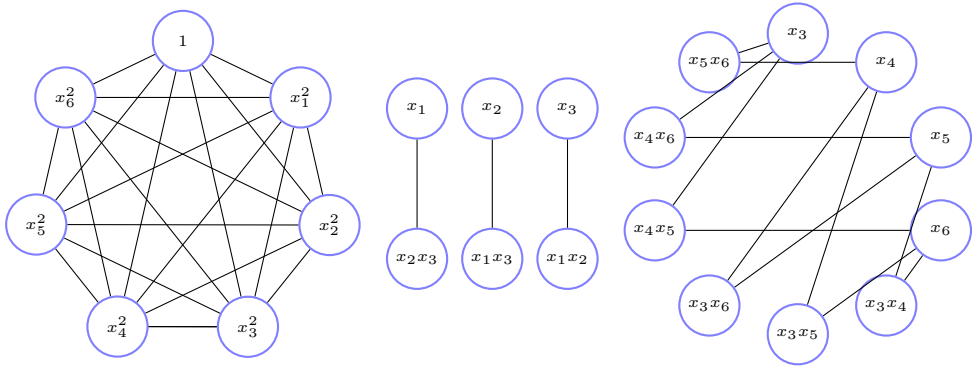
\end{example}

The CS-TSSOS hierarchy entails a trade-off. Indeed, one has the freedom to choose a specific chordal extension for any graph involved in \eqref{sec4-eq2}. This choice affects the resulting size of blocks of SDP matrices and the quality of optimal values of corresponding relaxations. Intuitively, chordal extensions with small clique numbers lead to blocks of small size and optimal values of (possibly) low quality while chordal extensions with large clique numbers lead to blocks of large size and optimal values of (possibly) high quality.

\medskip

For all $l,j$, write $M_{d-d_j}(g_j\y, I_l)=\sum_{\a}D_{\a}^{l,j}y_{\a}$ for appropriate symmetry matrices $\{D_{\a}^{l,j}\}$. Then for each $k\ge1$, the dual of $(\textrm{Q}^{\textrm{cs-ts}}_{d,k})$ reads as:
\begin{equation}\label{sec4-eq3}
(\textrm{Q}^{\textrm{cs-ts}}_{d,k})^*:\quad
\begin{cases}
\sup\,&\rho\\
\textrm{s.t.}\, &\sum_{l=1}^p\sum_{j\in \{0\}\cup J_l}\langle Q_{l,j},D_{\a}^{l,j}\rangle+\rho\delta_{\mathbf{0}\a}=f_{\a},\quad\forall\a\in\C_{d}^{(k)},\\
&Q_{l,j}\in\mathbf{S}_+^{r_{l,j}}\cap\mathbf{S}_{G_{d,l,j}^{(k)}},\quad j\in \{0\}\cup J_l, l=1,\ldots,p,
\end{cases}
\end{equation}
where $\C_{d}^{(k)}$ is defined in \eqref{sec4-eq11}.

\begin{proposition}
Let $f\in\R[\x]$ and $\mathbf{K}$ be as in \eqref{sec2-eq10}. Assume that $\mathbf{K}$ has a nonempty interior. Then there is no duality gap between $(\textrm{Q}^{\textrm{cs-ts}}_{d,k})$ and $(\textrm{Q}^{\textrm{cs-ts}}_{d,k})^*$ for any $d\ge d_{\min}$ and $k\ge1$.
\end{proposition}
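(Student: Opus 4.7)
The plan is to verify a Slater-type constraint qualification on the primal moment relaxation $(\textrm{Q}^{\textrm{cs-ts}}_{d,k})$ and invoke standard conic duality to conclude that strong duality holds with its dual $(\textrm{Q}^{\textrm{cs-ts}}_{d,k})^*$. Concretely, I aim to show that the primal admits a strictly feasible point, i.e.~a sequence $\y=(y_\a)$ for which every moment/localizing submatrix occurring in \eqref{sec4-eq2} is positive definite in the cone $\Pi_{G_{d,l,j}^{(k)}}(\mathbf{S}_+^{r_{l,j}})$. Combined with the obvious boundedness from below of the primal by $\rho^\star>-\infty$ (the problem being a relaxation of $(\textrm{Q})$ which has a finite infimum on a set with nonempty interior), standard results from SDP duality then imply that the dual $(\textrm{Q}^{\textrm{cs-ts}}_{d,k})^*$ is attained with equal optimal value, giving the claim.

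First I would pick a point $\x_0\in\textrm{int}(\mathbf{K})$ and a radius $\varepsilon>0$ such that the Euclidean ball $B(\x_0,\varepsilon)$ is contained in $\textrm{int}(\mathbf{K})$; in particular, $g_j(\x)>0$ on $B(\x_0,\varepsilon)$ for every $j\in[m]$. Let $\mu$ be the uniform probability measure on $B(\x_0,\varepsilon)$ and set $y_\a:=\int \x^\a\,d\mu$ for all $\a\in\N^n$. Then $y_{\mathbf{0}}=1$, and for any nonzero polynomial $p\in\R_d[\x]$ one has $p^\top M_d(\y)p=\int p(\x)^2\,d\mu>0$ since $p^2$ is a nonzero continuous nonnegative function on the open ball. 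Thus $M_d(\y)\succ 0$. The same argument with the weight $g_j$ yields $p^\top M_{d-d_j}(g_j\y)p=\int p(\x)^2 g_j(\x)\,d\mu>0$, so $M_{d-d_j}(g_j\y)\succ 0$ for every $j\in[m]$.

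Next I pass from these dense PSD properties to the sparse block structure used in \eqref{sec4-eq2}. Every submatrix $M_{d-d_j}(g_j\y,I_l)$ is a principal submatrix of $M_{d-d_j}(g_j\y)$, hence it is strictly positive definite as well. By Theorem \ref{sec2-thm2}, membership in $\Pi_{G_{d,l,j}^{(k)}}(\mathbf{S}_+^{r_{l,j}})$ is equivalent to the positive semidefiniteness of every principal submatrix indexed by a maximal clique of $G_{d,l,j}^{(k)}$; since the Hadamard product $B_{G_{d,l,j}^{(k)}}\circ M_{d-d_j}(g_j\y,I_l)$ agrees with $M_{d-d_j}(g_j\y,I_l)$ on the rows and columns belonging to any such clique (the clique being complete), each of these submatrices inherits strict positive definiteness. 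Therefore $\y$ lies in the relative interior of the feasible set of $(\textrm{Q}^{\textrm{cs-ts}}_{d,k})$.

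Finally I apply Slater's condition for conic linear programs (see, e.g.,~\cite{wolkowicz2012handbook}): strict primal feasibility together with boundedness of the primal value implies zero duality gap and attainment of the dual. The main obstacle in this plan is really the bookkeeping step that translates strict feasibility of the standard moment/localizing matrices into strict feasibility of the sparse $\Pi_G$-cone formulation appearing in \eqref{sec4-eq2}, but this is precisely what Theorem \ref{sec2-thm2} makes routine; the rest of the argument is the classical construction of a strictly feasible moment sequence from a measure supported in the interior of $\mathbf{K}$.
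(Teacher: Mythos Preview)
Your argument is correct and is precisely an unpacked version of the paper's two-line proof, which simply cites Theorem~3.6 of \cite{Las06} (strong duality for the correlatively sparse relaxations under the nonempty-interior hypothesis, established via the very same strictly feasible moment-sequence construction you wrote out) together with Theorem~\ref{sec2-thm2} to transfer strict feasibility from the full submatrices $M_{d-d_j}(g_j\y,I_l)$ to the chordal completable cones $\Pi_{G_{d,l,j}^{(k)}}(\mathbf{S}_+^{r_{l,j}})$.

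One small slip worth fixing: your claim that the primal is ``bounded from below by $\rho^\star$'' is backwards---$(\textrm{Q}^{\textrm{cs-ts}}_{d,k})$ is a \emph{relaxation} of $(\textrm{Q})$, so its optimal value is a \emph{lower} bound for $\rho^\star$, not the reverse, and nothing in the hypotheses forces $\rho^\star>-\infty$ anyway. This does not damage the proof: strict primal feasibility alone already yields zero duality gap (with dual attainment when the primal value is finite, and dual infeasibility when it is $-\infty$), so you can simply delete the boundedness clause.
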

\begin{proof}
By the duality theory of convex programming, this easily follows from Theorem 3.6 of \cite{Las06} and Theorem \ref{sec2-thm2}.
\end{proof}

Note that the number of equality constraints in \eqref{sec4-eq3} is equal to the cardinality of $\C_{d}^{(k)}$. We next give a description of the elements in $\C_{d}^{(k)}$ in terms of sign symmetries.

\subsection{Sign symmetries}
\begin{definition}
Given a finite set $\A\subseteq\N^n$, the {\em sign symmetries} of $\A$ are defined by all vectors $\br\in\Z_2^n:=\{0,1\}^n$ such that $\br^T\a\equiv0$ $(\textrm{mod }2)$ for all $\a\in\A$.
\end{definition}
For any $\a\in\N^n$, we define $(\a)_2:=(\alpha_1 (\textrm{mod } 2), \dots, \alpha_n (\textrm{mod } 2))\in\Z_2^n$.
We also use the same notation for any subset $\A\subseteq \N^n$, i.e., $(\A)_2:=\{(\a)_2\mid\a\in\mathscr{A}\}\subseteq\Z_2^n$. For a subset $S\subseteq\Z_2^n$, the {\em orthogonal complement space} of $S$ in $\Z_2^n$, denoted by $S^{\perp}$, is the set $\{\a\in\Z_2^n\mid\a^T\bs\equiv0\,(\textrm{mod }2)\,,\forall\bs\in S\}$.
\begin{remark}
By definition, the set of sign symmetries of $\A$ is exactly the orthogonal complement space $(\A)_2^{\perp}$ in $\Z_2^n$, which therefore can be essentially represented by a basis of the subspace $(\A)_2^{\perp}$ in $\Z_2^n$.
\end{remark}

For a subset $S\subseteq\Z_2^n$, we say that $S$ is {\em closed} under addition modulo $2$ if $\bs_1,\bs_2\in S$ implies $(\bs_1+\bs_2)_2\in S$. The minimal set containing $S$ with elements which are closed under addition modulo $2$ is denoted by $\langle S\rangle_{\Z_2}$. It is easy to prove $\langle S\rangle_{\Z_2}=\{(\sum_i\bs_i)_2\mid\bs_i\in S\}$ which is the subspace spanned by $S$ in $\Z_2^n$.

%The following lemma is easy to verify.
\begin{lemma}\label{sec4-lm}
Let $S\subseteq\Z_2^n$. Then $(S^{\perp})^{\perp}=\langle S\rangle_{\Z_2}$.
\end{lemma}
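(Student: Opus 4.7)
The plan is to prove the two inclusions $\langle S\rangle_{\Z_2}\subseteq (S^\perp)^\perp$ and $(S^\perp)^\perp\subseteq\langle S\rangle_{\Z_2}$ separately.

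For the first inclusion I would argue directly from the definitions. Take any $\bs\in\langle S\rangle_{\Z_2}$; by the explicit description recalled just before the lemma, one may write $\bs=(\sum_i\bs_i)_2$ with $\bs_i\in S$. For any $\br\in S^\perp$, we have $\br^T\bs_i\equiv 0\pmod 2$ for each $i$ by definition of $S^\perp$. Summing these congruences and reducing modulo $2$ gives $\br^T\bs\equiv 0\pmod 2$, so $\bs\in(S^\perp)^\perp$, as desired.

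For the reverse inclusion the idea is to pass to the subspace $W:=\langle S\rangle_{\Z_2}$ and invoke standard linear algebra over $\Z_2$. First I would observe that $S^\perp=W^\perp$: the inclusion $W^\perp\subseteq S^\perp$ is automatic since $S\subseteq W$, and conversely any $\br\in S^\perp$ annihilates every $\Z_2$-linear combination of elements of $S$ modulo $2$, hence annihilates all of $W$. Consequently $(S^\perp)^\perp=(W^\perp)^\perp$, and it suffices to prove $(W^\perp)^\perp=W$ for an arbitrary subspace $W\subseteq\Z_2^n$. The inclusion $W\subseteq(W^\perp)^\perp$ is immediate from the definition of $W^\perp$.

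To obtain equality I would use a dimension count. The bilinear form $(\br,\bs)\mapsto\br^T\bs\bmod 2$ on $\Z_2^n$ is non-degenerate in the sense that no nonzero $\br\in\Z_2^n$ annihilates the whole space (as it fails to annihilate a standard basis vector). Therefore, applying rank-nullity to the linear map $\Z_2^n\to\mathrm{Hom}_{\Z_2}(W,\Z_2)$ sending $\br$ to the linear form $\bs\mapsto\br^T\bs\bmod 2$ restricted to $W$ yields $\dim W+\dim W^\perp=n$. Applying the same formula to $W^\perp$ in place of $W$ gives $\dim(W^\perp)^\perp=n-\dim W^\perp=\dim W$, which combined with the inclusion $W\subseteq(W^\perp)^\perp$ forces equality.

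The only subtlety worth flagging is that the dot product on $\Z_2^n$ is not positive definite — individual vectors can be self-orthogonal, e.g.\ $(1,1)\cdot(1,1)\equiv 0\pmod 2$ — so one cannot quote the usual orthogonal decomposition $\Z_2^n=W\oplus W^\perp$. I expect this to be the main point requiring care. However, global non-degeneracy (which is what we actually have) is exactly the hypothesis needed for the dimension formula $\dim W+\dim W^\perp=n$ to hold, and this is all the argument uses.
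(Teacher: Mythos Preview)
Your proof is correct. The paper itself offers no argument beyond the one-line assertion ``It is immediate from the definitions,'' so your proposal is strictly more detailed: you supply exactly the standard linear-algebra justification (double orthogonal equals span for a non-degenerate symmetric bilinear form over a field) that the paper leaves implicit. The one place worth tightening is the rank--nullity step: to conclude $\dim W+\dim W^\perp=n$ you need the map $\Z_2^n\to\mathrm{Hom}_{\Z_2}(W,\Z_2)$ to be surjective, not just to have kernel $W^\perp$; this surjectivity is precisely what global non-degeneracy buys (the induced map $\Z_2^n\to(\Z_2^n)^*$ is an isomorphism, and restriction $(\Z_2^n)^*\to W^*$ is always onto), so your invocation of non-degeneracy is the right ingredient even if the inference could be made more explicit.
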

\begin{proof}
It is immediate from the definitions.
\end{proof}

\begin{lemma}\label{sec4-lm2}
Suppose $G$ is a graph with $V(G)\subseteq\N^n$. Then it holds $(\supp(G'))_2\subseteq\langle(\supp(G))_2\rangle_{\Z_2}$.
\end{lemma}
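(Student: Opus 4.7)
The plan is to unpack the definitions: an element of $(\supp(G'))_2$ has the form $(\b+\g)_2$ for some edge $\{\b,\g\}\in E(G')$, and I want to write it as a $\Z_2$-sum of elements of the form $(\b'+\g')_2$ with $\{\b',\g'\}\in E(G)$. The natural tool is a path in $G$ from $\b$ to $\g$.

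First I would invoke the standing convention (Remark \ref{rm-max}) that every chordal extension considered in the paper is a subgraph of the maximal chordal extension of $G$. Consequently, any edge added when passing from $G$ to $G'$ must lie inside a single connected component of $G$: that is, if $\{\b,\g\}\in E(G')$, then either $\{\b,\g\}\in E(G)$ (in which case the conclusion is immediate) or $\b$ and $\g$ lie in the same connected component of $G$, so there exists a path $\b=\b_0,\b_1,\ldots,\b_k=\g$ in $G$ with $\{\b_{i-1},\b_i\}\in E(G)$ for every $i\in[k]$.

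Next I would use the telescoping identity
\begin{equation*}
\b+\g=\sum_{i=1}^{k}(\b_{i-1}+\b_i)-2\sum_{i=1}^{k-1}\b_i,
\end{equation*}
valid in $\N^n$. Reducing modulo $2$ makes the second sum vanish, yielding
\begin{equation*}
(\b+\g)_2=\Bigl(\sum_{i=1}^{k}(\b_{i-1}+\b_i)\Bigr)_{2}.
\end{equation*}
Since each summand $(\b_{i-1}+\b_i)_2$ lies in $(\supp(G))_2$ by construction, the right-hand side belongs to $\langle(\supp(G))_2\rangle_{\Z_2}$, which is precisely the $\Z_2$-span of $(\supp(G))_2$ by the characterization noted just before Lemma \ref{sec4-lm}. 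This gives $(\b+\g)_2\in\langle(\supp(G))_2\rangle_{\Z_2}$, proving the desired inclusion.

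I do not anticipate a serious obstacle: the entire argument rests on the restriction to subgraphs of the maximal chordal extension (so that chordal fill-in edges stay within connected components) and on the elementary observation that edge-increments along a path telescope to the difference of the endpoints. The only point to be a bit careful about is invoking the maximal-chordal-extension convention explicitly, since without it one could imagine pathological "extensions" connecting nodes from different components of $G$, for which the conclusion could fail.
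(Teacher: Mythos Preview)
Your proof is correct and follows essentially the same approach as the paper: both arguments use the convention that chordal extensions stay within connected components of $G$, pick a path in $G$ between the endpoints of a new edge, and then telescope the edge-increments along the path modulo $2$. The only cosmetic difference is that the paper phrases the telescoping as a short induction along the path rather than writing the sum explicitly.
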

\begin{proof}
By definition, we need to show $(\b+\g)_2\in\langle(\supp(G))_2\rangle_{\Z_2}$ for any $\{\b,\g\}\in E(G')$. Since in the process of chordal extensions, edges are added only if two nodes belong to the same connected component, for any $\{\b,\g\}\in E(G')$ there is a path connecting $\b$ and $\g$ in $G$: $\{\b,\up_1,\ldots,\up_r,\g\}$ with $\{\b,\up_1\},\{\up_r,\g\}\in E(G)$ and $\{\up_i,\up_{i+1}\}\in E(G),i=1,\ldots,r-1$. From $(\b+\up_1)_2,(\up_1+\up_2)_2\in (\supp(G))_2$, we deduce that $(\b+\up_2)_2\in\langle(\supp(G))_2\rangle_{\Z_2}$ because $\langle(\supp(G))_2\rangle_{\Z_2}$ is closed under addition modulo $2$. Likewise, we can prove $(\b+\up_i)_2\in\langle(\supp(G))_2\rangle_{\Z_2}$ for $i=3,\ldots,r+1$ with $\up_{r+1}:=\g$. Hence $(\b+\g)_2\in\langle(\supp(G))_2\rangle_{\Z_2}$ as desired.
\end{proof}

\begin{proposition}\label{prop-ss}
Let $\A$ be defined as in \eqref{sec4-eq0}, $\C_{d}^{(k)}$ be defined as in \eqref{sec4-eq11} and assume that the sign symmetries of $\mathscr{A}$ are represented by the column vectors of a binary matrix, denoted by $R$. Then for any $k\ge1$ and any $\a\in\C_{d}^{(k)}$, it holds $R^T\a\equiv0\,(\textrm{mod }2)\,$. In other words, $(\C_{d}^{(k)})_2\subseteq R^{\perp}$, where we regard $R$ as a set of its column vectors.
\end{proposition}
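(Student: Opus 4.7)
The plan is to first translate the goal into a more workable form. By definition the columns of $R$ form a basis of $(\A)_2^{\perp}$, so the orthogonal complement $R^{\perp}$ coincides with $((\A)_2^{\perp})^{\perp}$, which by Lemma~\ref{sec4-lm} equals $\langle(\A)_2\rangle_{\Z_2}$. Thus it suffices to show $(\C_{d}^{(k)})_2\subseteq\langle(\A)_2\rangle_{\Z_2}$ for every $k\ge 1$. Observe that since $\langle(\A)_2\rangle_{\Z_2}$ is a subspace of $\Z_2^n$, hence closed under addition modulo $2$, and since $(\supp(g_j))_2\subseteq(\A)_2\cup\{\mathbf{0}\}\subseteq\langle(\A)_2\rangle_{\Z_2}$ for every $j\in\{0\}\cup[m]$ (using $\supp(g_0)=\{\mathbf{0}\}$), the definition \eqref{sec4-eq11} reduces everything to proving the stronger auxiliary claim $(\supp(G_{d,l,j}^{(k)}))_2\subseteq\langle(\A)_2\rangle_{\Z_2}$ for all $l\in[p]$, $j\in\{0\}\cup J_l$, and $k\ge 0$.

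I would prove this auxiliary claim by induction on $k$. For the base case $k=0$, the graphs $G_{d,l,j}^{(0)}$ are empty for $j\in J_l$, so there is nothing to check there, while $G_{d,l,0}^{(0)}=G_{d,l}^{\textrm{tsp}}$ has edges $\{\b,\g\}$ with $\b+\g\in\A_l\cup(2\N)^n$. Every element of $\supp(G_{d,l,0}^{(0)})$ therefore either lies in $\A$ (giving $(\cdot)_2\in(\A)_2$) or lies in $(2\N)^n$ (giving $(\cdot)_2=\mathbf{0}$), so the base case holds.

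For the induction step, assume the claim holds at level $k$, which by the opening reduction gives $(\C_{d}^{(k)})_2\subseteq\langle(\A)_2\rangle_{\Z_2}$. For any edge $\{\b,\g\}\in E(F_{d,l,j}^{(k+1)})$, the definition \eqref{sec4-eq1} produces $\bn\in\supp(g_j)$ and $\dd\in\C_{d}^{(k)}$ with $\b+\g+\bn=\dd$. Reducing modulo $2$ and using that $-1\equiv 1$ in $\Z_2$, we get $(\b+\g)_2=(\dd)_2+(\bn)_2$, and both summands lie in $\langle(\A)_2\rangle_{\Z_2}$. Hence $(\supp(F_{d,l,j}^{(k+1)}))_2\subseteq\langle(\A)_2\rangle_{\Z_2}$, and Lemma~\ref{sec4-lm2} then yields $(\supp(G_{d,l,j}^{(k+1)}))_2\subseteq\langle(\supp(F_{d,l,j}^{(k+1)}))_2\rangle_{\Z_2}\subseteq\langle(\A)_2\rangle_{\Z_2}$, closing the induction.

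The argument is essentially bookkeeping once the reformulation via Lemma~\ref{sec4-lm} is in hand; there is no genuine obstacle. The only mild subtleties are to remember that the squares built into the initial tsp graph contribute $\mathbf{0}$ in $\Z_2^n$, that the constant polynomial $g_0=1$ must be separately checked, and that Lemma~\ref{sec4-lm2} is needed to absorb the chordal-extension step at every level of the iteration.
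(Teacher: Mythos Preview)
Your proof is correct and follows essentially the same approach as the paper's: reduce via Lemma~\ref{sec4-lm} to showing $(\C_{d}^{(k)})_2\subseteq\langle(\A)_2\rangle_{\Z_2}$, then induct on $k$ using the edge definition~\eqref{sec4-eq1} together with Lemma~\ref{sec4-lm2} to absorb the chordal extension. The only cosmetic difference is that you isolate the auxiliary claim $(\supp(G_{d,l,j}^{(k)}))_2\subseteq\langle(\A)_2\rangle_{\Z_2}$ and induct on that, whereas the paper inducts directly on $(\C_{d}^{(k)})_2$; the two are equivalent by the reduction you state at the outset.
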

\begin{proof}
By Lemma \ref{sec4-lm}, we only need to prove $(\C_{d}^{(k)})_2\subseteq\langle(\A)_2\rangle_{\Z_2}$. Let us do induction on $k\ge0$. For $k=0$, by \eqref{sec4-eq11}, $\C_{d}^{(0)}=\bigcup_{l=1}^p\supp(G_{d,l,0}^{(0)})=\bigcup_{l=1}^p\supp(G_{d,l}^{\textrm{tsp}})\subseteq\bigcup_{l=1}^p(\A_l\cup(2\N)^{n_l})\subseteq\A\cup(2\N)^n$. Hence $(\C_{d}^{(0)})_2\subseteq\langle(\A)_2\rangle_{\Z_2}$.
Now assume that $(\C_{d}^{(k)})_2\subseteq\langle(\A)_2\rangle_{\Z_2}$ holds for some $k\ge0$. By \eqref{sec4-eq1}, for any $l,j$ and any $\{\b,\g\}\in E(F_{d,l,j}^{(k+1)})$, we have $(\supp(g_j)+\b+\g)\cap\C_{d}^{(k)}\ne\emptyset$, i.e., there exists $\a\in\supp(g_j)$ such that $\a+\b+\g\in\C_{d}^{(k)}$, which implies $(\a+\b+\g)_2\in(\C_{d}^{(k)})_2$. Hence by the induction hypothesis, $(\a+\b+\g)_2\in\langle(\A)_2\rangle_{\Z_2}$. Since $\langle(\A)_2\rangle_{\Z_2}$ is closed under addition modulo $2$ and $(\a)_2\in (\A)_2$, we have $(\b+\g)_2\in\langle(\A)_2\rangle_{\Z_2}$. It follows $(\supp(F_{d,l,j}^{(k+1)}))_2\subseteq\langle(\A)_2\rangle_{\Z_2}$. Because $G_{d,l,j}^{(k+1)}=(F_{d,l,j}^{(k+1)})'$, by Lemma \ref{sec4-lm2}, we have $(\supp(G_{d,l,j}^{(k+1)}))_2\subseteq\langle(\supp(F_{d,l,j}^{(k+1)}))_2\rangle_{\Z_2}\subseteq\langle(\A)_2\rangle_{\Z_2}$. From this, \eqref{sec4-eq11} and the fact that $\langle(\A)_2\rangle_{\Z_2}$ is closed under addition modulo $2$, we then deduce the inclusion $(\C_{d}^{(k+1)})_2\subseteq\langle(\A)_2\rangle_{\Z_2}$ which completes the induction.
\end{proof}

\revision{\begin{remark}
Proposition \ref{prop-ss} actually indicates that the block structure produced by the CS-TSSOS hierarchy is consistent with the sign symmetries of the POP.
\end{remark}}

% Furthermore, we can rewrite \eqref{sec4-eq3} as a SOS programming:
% \begin{equation}\label{sec4-eq4}
% \begin{cases}
% \sup\,&\rho\\
% \textrm{s.t.}\, &f-\rho=\sum_{l=1}^p(\sigma_{l,0}+\sum_{j\in J_l}\sigma_{l,j}g_j),\\
% &\sigma_{l,0}\in\Sigma_{d}(\x(I_l)),\sigma_{l,j}\in\Sigma_{d-d_j}(\x(I_l)),\quad j\in J_l, l=1,\ldots,p.
% \end{cases}
% \end{equation}

\section{Convergence analysis}
\label{sec:cvg}
\subsection{Global convergence}\label{glo-con}
We next prove that if for any graph involved in \eqref{sec4-eq2}, the chordal extension is chosen to be \emph{maximal}, then 
for any relaxation order $d\ge d_{\min}$ the sequence of optimal values $(\rho^{(k)}_{d})_{k\ge1}$ of the CS-TSSOS hierarchy 
converges to the optimal value $\rho_{d}$ of the corresponding CSSOS hierarchy \eqref{sec2-eq13}. In turn, as the relaxation order $d$ 
increases, the latter sequence 
converges to the global optimum $\rho^*$ of the original POP \eqref{sec2-eq9} (after adding some redundant quadratic constraints) as shown in \cite{Las06}.

Obviously, the sequences of graphs $(G_{d,l,j}^{(k)}(V_{d,l,j},E_{d,l,j}^{(k)}))_{k\ge1}$ stabilize for all $j\in \{0\}\cup J_l,l=1,\ldots,p$ after finitely many steps. We denote the resulting stabilized graphs by $G_{d,l,j}^{(*)},j\in \{0\}\cup J_l,l=1,\ldots,p$ and the corresponding SDP \eqref{sec4-eq2} by $(\textrm{Q}^{\textrm{cs-ts}}_{d,*})$.
\begin{theorem}\label{sec4-thm1}
Assume that the chordal extension in \eqref{sec4-graph} is the maximal chordal extension. Then for any $d\ge d_{\min}$, the sequence $(\rho^{(k)}_{d})_{k\ge1}$ converges to $\rho_{d}$ in finitely many  steps.
\end{theorem}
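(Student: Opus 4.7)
The plan is as follows. Each sequence $(G_{d,l,j}^{(k)})_{k\ge 1}$ is an ascending chain of subgraphs of the finite graph with node set $\N^{n_l}_{d-d_j}$, and therefore stabilizes after finitely many iterations to some $G_{d,l,j}^{(*)}$. Write $\rho_d^{(*)}$ for the optimal value of the resulting stabilized SDP, which I denote by $(\textrm{Q}^{\textrm{cs-ts}}_{d,*})$. Proposition \ref{sec4-prop1} already yields $\rho_d^{(*)}\le\rho_d$, so the task reduces to establishing the reverse inequality.

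I would take any feasible moment sequence $\y^{*}=(y^{*}_{\a})$ for $(\textrm{Q}^{\textrm{cs-ts}}_{d,*})$ and build an extension $\tilde{\y}$ that is feasible for the dense CSSOS relaxation $(\textrm{Q}^{\textrm{cs}}_{d})$ with the same objective value. The natural choice is $\tilde{y}_{\a}:=y^{*}_{\a}$ whenever $\a$ indexes an entry of some sparse moment or localizing matrix in $(\textrm{Q}^{\textrm{cs-ts}}_{d,*})$, and $\tilde{y}_{\a}:=0$ otherwise. Since $\supp(f)\subseteq\A\subseteq\C_{d}^{(*)}$, the objective value is preserved: $L_{\tilde{\y}}(f)=L_{\y^{*}}(f)$.

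The heart of the argument is then showing that $M_{d-d_j}(g_j\tilde{\y},I_l)\succeq 0$ for all $l$ and $j\in\{0\}\cup J_l$. Because the chordal extension in \eqref{sec4-graph} is the maximal one, each connected component of $G_{d,l,j}^{(*)}$ is a clique, so Theorem \ref{sec2-thm2} applied to the feasibility of $\y^{*}$ forces each principal submatrix of $M_{d-d_j}(g_j\y^{*},I_l)$ indexed by a connected component to be PSD. I would then exploit the stability of the graphs: applying one more iteration of support extension followed by (maximal) chordal extension cannot enlarge $E(G_{d,l,j}^{(*)})$, which means that for any $\{\b,\g\}\notin E(G_{d,l,j}^{(*)})$ with $\b\ne\g$ one has $(\b+\g+\supp(g_j))\cap\C_{d}^{(*)}=\emptyset$; hence the corresponding entry of $M_{d-d_j}(g_j\tilde{\y},I_l)$ vanishes. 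Consequently $M_{d-d_j}(g_j\tilde{\y},I_l)$ is, up to a permutation of rows and columns, block-diagonal with PSD blocks indexed by the connected components of $G_{d,l,j}^{(*)}$, and is therefore itself PSD.

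The principal obstacle is precisely this vanishing claim on off-block entries: one has to track carefully how the stabilization condition propagates through the map $\y\mapsto M_{d-d_j}(g_j\y,I_l)$ so that pairs $\{\b,\g\}$ sitting in distinct connected components of the stable graph always contribute zero once $\tilde{y}_{\a}$ is zeroed out outside the active support. Once this is verified, combining $\rho_d\le L_{\tilde{\y}}(f)=L_{\y^{*}}(f)$ with the choice of $\y^{*}$ approaching the optimum of $(\textrm{Q}^{\textrm{cs-ts}}_{d,*})$ yields $\rho_d\le\rho_d^{(*)}$, and hence equality since $\rho_d^{(k)}=\rho_d^{(*)}$ for all $k$ large enough by finite stabilization.
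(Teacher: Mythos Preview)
Your proposal is correct and follows essentially the same approach as the paper's proof: extend a feasible solution of the stabilized relaxation by zero, use stability under support extension to show that the resulting localizing matrices coincide with their sparse (Hadamard-masked) versions, and then invoke the block-diagonal structure coming from the maximal chordal extension together with Theorem~\ref{sec2-thm2} to conclude positive semidefiniteness. The paper is slightly more terse in packaging the vanishing-of-off-block-entries step as the single identity $M_{d-d_j}(g_j\overline{\y},I_l)=B_{G_{d,l,j}^{(*)}}\circ M_{d-d_j}(g_j\y,I_l)$, but the content is identical.
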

\begin{proof}
Let $\y=(y_{\a})$ be an arbitrary feasible solution of $(\textrm{Q}^{\textrm{cs-ts}}_{d,*})$ and $\rho_{d}^*$ be the optimal value of $(\textrm{Q}^{\textrm{cs-ts}}_{d,*})$. Note that $\{y_{\a}\mid\a\in\bigcup_{l=1}^p(\cup_{j\in \{0\}\cup J_l}(\supp(g_j)+\supp(G_{d,l,j}^{(*)})))\}$ is the set of decision variables involved in $(\textrm{Q}^{\textrm{cs-ts}}_{d,*})$. Let $\RR$ be the set of decision variables involved in $(\textrm{Q}^{\textrm{cs}}_{d})$ \eqref{sec2-eq13}.
We then define a vector $\overline{\y}=(\overline{y}_{\a})_{\a\in\RR}$ as follows:
$$\overline{y}_{\a}=\begin{cases}y_{\a},\quad\textrm{ if }\a\in\bigcup_{l=1}^p(\cup_{j\in \{0\}\cup J_l}(\supp(g_j)+\supp(G_{d,l,j}^{(*)}))),\\
0,\quad\quad\textrm{otherwise}.
\end{cases}$$
\revision{By construction and since $G_{d,l,j}^{(*)}$ stabilizes under support extension for all $l,j$, we have $M_{d-d_j}(g_j\overline{\y},I_l)=B_{G_{l,j,d}^{(*)}}\circ M_{d-d_j}(g_j\y,I_l)$.
As we use the maximal chordal extension in \eqref{sec4-graph}, the matrix $B_{G_{l,j,d}^{(*)}}\circ M_{d-d_j}(g_j\y,I_l)$ is block diagonal up to permutation (see Remark \ref{rm-max}). So from $B_{G_{l,j,d}^{(*)}}\circ M_{d-d_j}(g_j\y, I_l)\in\Pi_{G_{l,j,d}^{(*)}}(\mathbf{S}_+^{r_{l,j}})$ it follows $M_{d-d_j}(g_j\overline{\y},I_l)\succeq0$ for $j\in \{0\}\cup J_l, l=1,\ldots,p$.} Therefore $\overline{\y}$ is a feasible solution of $(\textrm{Q}^{\textrm{cs}}_{d})$ and so $L_{\y}(f)=L_{\overline{\y}}(f)\ge\rho_{d}$.
Hence $\rho^{*}_{d}\ge\rho_{d}$ since $\y$ is an arbitrary feasible solution of $(\textrm{Q}^{\textrm{cs-ts}}_{d,*})$. 
By Proposition \ref{sec4-prop1}, we already have $\rho^{*}_{d}\le\rho_{d}$. Therefore, $\rho^{*}_{d}=\rho_{d}$.
\end{proof}

To guarantee the global optimality, we need the following compactness assumption on the feasible set $\mathbf{K}$.

\medskip
\noindent{\bf Assumption 1.} Let $\mathbf{K}$ be as in \eqref{sec2-eq10}. There exists an $M>0$ such that $||\x||_{\infty} <M$ for all $\x\in\mathbf{K}$.
\medskip

Because of Assumption 1, one has $||\x(I_l)||_2^2\le n_lM^2$, $l=1,\ldots,p$. Therefore, we can add the $p$ redundant quadratic constraints
\begin{equation}\label{sec4-eq5}
    g_{m+l}(\x):=n_lM^2-||\x(I_l)||_2^2\ge0,\quad l=1,\ldots,p
\end{equation}
in the definition \eqref{sec2-eq10} of $\mathbf{K}$ and set $m' = m + p$, so that $\mathbf{K}$ is now defined by
\begin{equation}\label{sec4-eq6}
\mathbf{K}:=\{\x\in\R^n\mid g_j(\x)\ge0,\quad j=1,\ldots,m'\}.
\end{equation}
Note that $g_{m+l}\in\R[\x(I_l)]$ for $l=1,\ldots,p$.

Then by Theorem 3.6 in \cite{Las06}, the sequence $(\rho_{d})_{d\ge d_{\min}}$ converges to the globally optimal value $\rho^*$ of (Q) \eqref{sec2-eq9}. So this together with Theorem \ref{sec4-thm1} gives the global convergence of the CS-TSSOS hierarchy.

\subsection{A sparse representation theorem}
Proceeding along Theorem \ref{sec4-thm1}, we 
are able to provide a \emph{sparse representation} theorem for a polynomial positive on a compact basic semialgebraic set. 

\begin{theorem}[sparse representation]\label{sec4-thm2}
Let $f\in\R[\x]$ and $\mathbf{K}$ be as in \eqref{sec4-eq6} with the additional quadratic constraints \eqref{sec4-eq5}. Let $I_l,J_l$ be defined as in Section~\ref{sec4-1} and $\mathscr{A}=\supp(f)\cup\bigcup_{j=1}^{m'}\supp(g_j)$. Assume that the sign symmetries of $\mathscr{A}$ are represented by the column vectors of the binary matrix $R$. If $f$ is positive on $\mathbf{K}$, then
%$f$ can be represented as
\begin{equation}
\label{sparse-certificate}
f=\sum_{l=1}^p\left(\sigma_{l,0}+\sum_{j\in J_l}\sigma_{l,j}g_j\right),
\end{equation}
for some polynomials $\sigma_{l,j}\in\Sigma[\x(I_l)],j\in \{0\}\cup J_l, l=1,\ldots,p$, satisfying $R^T\a\equiv0$ $(\textrm{mod }2)$ for any $\a\in\supp(\sigma_{l,j})$, i.e., $(\supp(\sigma_{l,j}))_2\subseteq R^{\perp}$, where we regard $R$ as a set of its column vectors.

That is, \eqref{sparse-certificate} provides a certificate of positivity of $f$  on $\mathbf{K}$.
\end{theorem}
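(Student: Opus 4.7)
The plan is to derive the desired sparse certificate in two stages: first invoke a classical sparse Positivstellensatz to obtain \emph{some} decomposition of the claimed shape, then use a symmetrization argument over the group of sign symmetries to force each SOS summand to have the prescribed support.

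First, I would observe that the additional quadratic constraints $g_{m+l}(\x)=n_lM^2-\|\x(I_l)\|_2^2\ge 0$ together with the fact that $I_1,\ldots,I_p$ are the maximal cliques of a chordal graph (hence satisfy the running intersection property) place us exactly in the setting of Theorem~3.6 of \cite{Las06}. Since $f>0$ on $\mathbf{K}$, that theorem yields SOS polynomials $\sigma_{l,j}\in\Sigma[\x(I_l)]$, $j\in\{0\}\cup J_l$, $l=1,\ldots,p$, such that
\[ f=\sum_{l=1}^p\Bigl(\sigma_{l,0}+\sum_{j\in J_l}\sigma_{l,j}\,g_j\Bigr). \]
This preliminary decomposition has the correct clique structure but no control on the supports of the $\sigma_{l,j}$.

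Next, for each column $\br$ of $R$ (and more generally each $\br\in G:=\langle R\rangle_{\Z_2}$) I would introduce the ring automorphism $\tau_\br:\R[\x]\to\R[\x]$ determined by $x_i\mapsto(-1)^{r_i}x_i$. The defining property of a sign symmetry $\br^T\a\equiv 0\,(\textrm{mod }2)$ for all $\a\in\A$ gives $\tau_\br(\x^\a)=\x^\a$ for every $\a\in\A$, so $\tau_\br(f)=f$ and $\tau_\br(g_j)=g_j$ for all $j\in[m']$. Moreover, $\tau_\br$ maps $\Sigma[\x(I_l)]$ into itself, because squaring commutes with the variable substitution and $\tau_\br$ preserves the subring $\R[\x(I_l)]$. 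Applying $\tau_\br$ to the preliminary decomposition and then averaging over $G$, I would define
\[ \bar{\sigma}_{l,j}:=\frac{1}{|G|}\sum_{\br\in G}\tau_\br(\sigma_{l,j}),\qquad j\in\{0\}\cup J_l,\ l=1,\ldots,p, \]
which still lies in $\Sigma[\x(I_l)]$ since a nonnegative scalar combination of SOS polynomials is SOS. Since each $\tau_\br$ fixes $f$ and every $g_j$, one obtains
\[ f=\frac{1}{|G|}\sum_{\br\in G}\tau_\br(f)=\sum_{l=1}^p\Bigl(\bar{\sigma}_{l,0}+\sum_{j\in J_l}\bar{\sigma}_{l,j}\,g_j\Bigr). \]

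Finally, to verify the support constraint, I would compare coefficients: the coefficient of $\x^\a$ in $\bar{\sigma}_{l,j}$ equals $\bigl(|G|^{-1}\sum_{\br\in G}(-1)^{\br^T\a}\bigr)$ times the coefficient of $\x^\a$ in $\sigma_{l,j}$. Since $G$ is an $\Z_2$-subspace, this character sum is $1$ when $\br^T\a\equiv 0\,(\textrm{mod }2)$ for all $\br\in G$ and $0$ otherwise; equivalently, the coefficient survives only when $(\a)_2\in R^\perp$. This yields $(\supp(\bar{\sigma}_{l,j}))_2\subseteq R^\perp$, as required. The main obstacle I anticipate is not a step in this plan itself (each ingredient is rather clean), but rather making sure that the ingredient borrowed from \cite{Las06} truly applies here: one must check that the added ball constraints $g_{m+l}\in\R[\x(I_l)]$ are assigned to clique $l$ in the partition $J_1,\ldots,J_p$ of $[m']$ so that Lasserre's archimedean-per-clique hypothesis is met.
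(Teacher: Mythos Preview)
Your proof is correct and follows the same two-stage strategy as the paper: invoke the sparse Putinar Positivstellensatz (the paper cites Corollary~3.9 of \cite{Las06} rather than Theorem~3.6, but the content is the same), then symmetrize with respect to the sign-symmetry group to enforce the support constraint.

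The one genuine difference is in how the symmetrization step is carried out. The paper works at the Gram-matrix level: it zeroes out the entries $[Q'_{l,j}]_{\b\g}$ with $R^T(\b+\g)\not\equiv 0\pmod 2$, observes that the resulting matrix is block diagonal (up to permutation) with each block a principal submatrix of $Q'_{l,j}$, and concludes positive semidefiniteness from that. Your Reynolds-operator averaging $\bar\sigma_{l,j}=|G|^{-1}\sum_{\br\in G}\tau_\br(\sigma_{l,j})$ produces exactly the same polynomial---indeed, on Gram matrices it is the map $Q\mapsto |G|^{-1}\sum_{\br}D_\br Q D_\br$, whose $(\b,\g)$ entry is $Q_{\b\g}$ or $0$ according to the same parity test---but your verification that the result is SOS is cleaner: it is a convex combination of SOS polynomials, so the block-diagonal/principal-submatrix argument is bypassed entirely. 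The paper's route makes the induced block structure explicit, which ties in with the earlier material on term sparsity; yours is shorter and more conceptual. Your closing worry about the per-clique Archimedean hypothesis is handled exactly as you anticipate: the paper assigns $g_{m+l}\in\R[\x(I_l)]$ to clique $l$, so Lasserre's hypothesis is met.
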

\begin{proof}
By Corollary 3.9 of \cite{Las06} (or Theorem 5 of \cite{grim}), there exist polynomials $\sigma'_{l,j}\in\Sigma[\x(I_l)],j\in \{0\}\cup J_l, l=1,\ldots,p$ such that
\begin{equation}\label{sec4-eq7}
f=\sum_{l=1}^p\left(\sigma'_{l,0}+\sum_{j\in J_l}\sigma'_{l,j}g_j\right).
\end{equation}
Let $d=\max\{\lceil\deg(\sigma'_{l,j}g_j)/2\rceil: j\in \{0\}\cup J_l, l=1,\ldots,p\}$. Let $Q'_{l,j}$ be a PSD Gram matrix associated with $\sigma'_{l,j}$ and indexed by the monomial basis $\N^{n_l}_{d-d_j}$. Then for all $l,j$, we define $Q_{l,j}\in\mathbf{S}^{r_{l,j}}$ with $r_{l,j}=\binom{n_l+d-d_j}{d-d_j}$ (indexed by $\N^{n_l}_{d-d_j}$) by
\begin{equation*}
    [Q_{l,j}]_{\b\g}:=\begin{cases}
    [Q'_{l,j}]_{\b\g}, \quad\textrm{if }R^T(\b+\g)\equiv0$ $(\textrm{mod }2),\\
    0, \quad\quad\quad\quad\textrm{otherwise,}
    \end{cases}
\end{equation*}
and let $\sigma_{l,j}=(\x^{\N^{n_l}_{d-d_j}})^TQ_{l,j}\x^{\N^{n_l}_{d-d_j}}$.
One can easily verify that $Q_{l,j}$ is block diagonal up to permutation (see also \cite{wang2}) and each block is a principal submatrix of $Q'_{l,j}$. Then the positive semidefiniteness of $Q'_{l,j}$ implies that $Q_{l,j}$ is also positive semidefinite. Thus $\sigma_{l,j}\in\Sigma[\x(I_l)]$.

By construction, substituting $\sigma'_{l,j}$ with $\sigma_{l,j}$ in \eqref{sec4-eq7} boils down to removing the terms with exponents $\a$ that do not satisfy $R^T\a\equiv0$ $(\textrm{mod }2)$ from the right hand side of \eqref{sec4-eq7}. Since any $\a\in\supp(f)$ satisfies $R^T\a\equiv0$ $(\textrm{mod }2)$, this does not change the match of coefficients on both sides of the equality. Thus we obtain
\begin{equation*}
f=\sum_{l=1}^p\left(\sigma_{l,0}+\sum_{j\in J_l}\sigma_{l,j}g_j\right)
\end{equation*}
with the desired property.
\end{proof}

\subsection{Extracting a solution} \label{sec:extract}
In the case of dense moment-SOS relaxations, there is a standard procedure described in \cite{henrion2005detecting} to extract globally optimal solutions when the so-called flatness condition of the moment matrix is satisfied, and this procedure is also generalized to the correlative sparsity setting in \cite[\textsection~3.3]{Las06}. However, in the combined sparsity setting, the corresponding procedure cannot be directly applied because we do not have full information on the moment matrix associated with each clique. 
In order to extract a solution in this case, we may add a dense moment matrix of order one for each clique in \eqref{sec4-eq2}:
\begin{equation}\label{sec4-eq8}
(\textrm{Q}^{\textrm{cs-ts}}_{d,k})':\quad
\begin{cases}
\inf&L_{\y}(f)\\
\textrm{s.t.}&B_{G_{d,l,0}^{(k)}}\circ M_{d}(\y, I_l)\in\Pi_{G_{d,l,0}^{(k)}}(\mathbf{S}_+^{r_{l,0}}),\quad l=1,\ldots,p,\\
&M_{1}(\y, I_l)\succeq0,\quad l=1,\ldots,p,\\
&B_{G_{d,l,j}^{(k)}}\circ M_{d-d_j}(g_j\y, I_l)\in\Pi_{G_{d,l,j}^{(k)}}(\mathbf{S}_+^{r_{l,j}}),\quad j\in J_l,l=1,\ldots,p,\\
&y_{\mathbf{0}}=1.
\end{cases}
\end{equation}

Let $\y^*$ be an optimal solution of $(\textrm{Q}^{\textrm{cs-ts}}_{d,k})'$. Typically, $M_{1}(\y^*, I_l)$ (after identifying sufficiently small entries with zero) is a block diagonal matrix (up to permutation). If for all $l$, every block of $M_{1}(\y^*, I_l))$ has rank one, then a globally optimal solution $\x^*$ to (Q) \eqref{sec2-eq9} can be extracted and the global optimality is certified (see \cite[Theorem~3.2]{Las06}). Otherwise, the relaxation might be not exact or yield multiple global solutions. In the latter case, adding a small perturbation to the objective function, as in \cite{waki}, may yield a unique global solution.

\begin{remark}
Note that $(\textrm{Q}^{\textrm{cs-ts}}_{d,k})'$ is a tighter 
relaxation of $(\textrm{Q})$ than $(\textrm{Q}^{\textrm{cs-ts}}_{d,k})$ and so might provide a better lower bound for $(\textrm{Q})$.
\end{remark}

\section{Applications and numerical experiments}
\label{sec:benchs}
In this section, we conduct numerical experiments for the proposed CS-TSSOS hierarchy and apply it to two important classes of POPs: Max-Cut problems and AC optimal power flow (AC-OPF) problems. Depending on specific problems, we consider two types of chordal extensions for the term sparsity pattern: maximal chordal extensions and approximately smallest chordal extensions\footnote{A smallest chordal extension is a chordal extension with the smallest clique number. Computing a smallest chordal extension is generally NP-complete. So in practice we compute approximately smallest chordal extensions instead with efficient heuristic algorithms.}. The tool {\tt TSSOS} which executes the CS-TSSOS hierarchy (as well as the CSSOS hierarchy and the TSSOS hierarchy) is implemented in Julia. For an introduction to {\tt TSSOS}, one could refer to \cite{magron2021tssos}.
{\tt TSSOS} is available on the website:

%\vspace{2pt}
\begin{center}
\url{https://github.com/wangjie212/TSSOS}.
\end{center}
%\vspace{2pt}

In the following subsections, we compare the performances of the CSSOS approach, the TSSOS approach, the CS-TSSOS approach and the SDSOS approach \cite{ahmadi2019dsos} (implemented in {\tt SPOT} \cite{spot}).
{\tt Mosek} \cite{mosek} is used as an SDP (in the CSSOS, TSSOS, CS-TSSOS cases) or SOCP (in the SDSOS case) solver. All numerical examples were computed on an Intel Core i5-8265U@1.60GHz CPU with 8GB RAM memory. The timing includes the time required to generate the SDP/SOCP and the time spent to solve it. The notations used in this section are listed in Table \ref{not}.

\begin{table}[htbp]
\caption{Notation}\label{not}
\begin{center}
\begin{tabular}{|c|c|}
\hline
var&number of variables\\
\hline
cons&number of constraints\\
\hline
mc&maximal size of variable cliques\\
\hline
mb&maximal size of SDP blocks\\
\hline
opt&optimal value\\
\hline
time&running time in seconds\\
\hline
gap&optimality gap\\
\hline
CE&type of chordal extensions used in \eqref{sec4-graph}\\
\hline
min&approximately smallest chordal extension\\
\hline
max&maximal chordal extension\\
\hline
0&a number whose absolute value less than $1\text{e-}5$\\
\hline
-&an out of memory error\\
\hline
\end{tabular}
\end{center}
\end{table}

\subsection{Benchmarks for unconstrained POPs}
The Broyden banded function is defined as
\begin{equation*}
    f_{\textrm{Bb}}(\x)=\sum_{i=1}^n(x_i(2+5x_i^2)+1-\sum_{j\in J_i}(1+x_j)x_j)^2,
\end{equation*}
where $J_i=\{j\mid j\ne i, \max(1,i-5)\le j\le\min(n,i+1)\}$.

The task is to minimize the Broyden banded function over $\R^n$ which is formulated as an unconstrained POP. Using the relaxation order $d=3$, we solve the CSSOS hierarchy $(\textrm{Q}^{\textrm{cs}}_{d})$ \eqref{sec2-eq13}, the TSSOS hierarchy $(\textrm{Q}^{\textrm{ts}}_{d,k})$ \eqref{sec2-eq16} with $k=1$ and the CS-TSSOS hierarchy $(\textrm{Q}^{\textrm{cs-ts}}_{d,k})$ \eqref{sec4-eq2} with $k=1$. In the latter two cases, approximately smallest chordal extensions are used. We also solve the POP with the SDSOS approach. The results are displayed in Table \ref{bb}. 

\revision{It can be seen from the table that CS-TSSOS significantly reduces the maximal size of SDP blocks and is the most efficient approach. CSSOS, TSSOS and CS-TSSOS all give the exact minimum $0$ while SDSOS only gives a very loose lower bound $-13731$ when $n=20$. Due to the limitation of memory, CSSOS scales up to $180$ varables; TSSOS scales up to $40$ varables; SDSOS scales up to $20$ varables. On the other hand, CS-TSSOS can easily handle instances with up to $500$ variables.}

\begin{table}[htbp]
\caption{The result for Broyden banded functions ($d=3$)}\label{bb}
\begin{center}
\begin{tabular}{|c|c|c|c|c|c|c|c|c|c|c|c|}
\hline
\multirow{2}*{var}&\multicolumn{3}{c|}{CSSOS}&\multicolumn{3}{c|}{TSSOS}&\multicolumn{3}{c|}{CS-TSSOS}&\multicolumn{2}{c|}{SDSOS}\\
\cline{2-12}
&mb&opt&time&mb&opt&time&mb&opt&time&opt&time\\
\hline
20&$120$&0&$21.7$&$33$&0&$4.39$&$19$&0&$2.24$&$-13731$&$374$\\
\hline
40&$120$&0&$44.6$&$52$&0&$231$&$19$&0&$6.95$&-&-\\
\hline
60&$120$&0&$81.8$&-&-&-&$19$&0&$13.0$&-&-\\
\hline
80&$120$&0&$116$&-&-&-&$19$&0&$19.6$&-&-\\
\hline
100&$120$&0&$151$&-&-&-&$19$&0&$27.0$&-&-\\
\hline
120&$120$&0&195&-&-&-&$19$&0&$34.4$&-&-\\
\hline
140&$120$&0&249&-&-&-&$19$&0&$43.1$&-&-\\
\hline
160&$120$&0&298&-&-&-&$19$&0&$50.2$&-&-\\
\hline
180&$120$&0&338&-&-&-&$19$&0&$63.8$&-&-\\
\hline
200&$120$&-&-&-&-&-&$19$&0&$72.9$&-&-\\
\hline
250&$120$&-&-&-&-&-&$19$&0&$106$&-&-\\
\hline
300&$120$&-&-&-&-&-&$19$&0&$132$&-&-\\
\hline
400&$120$&-&-&-&-&-&$19$&0&$220$&-&-\\
\hline
500&$120$&-&-&-&-&-&$19$&0&$313$&-&-\\
\hline
\end{tabular}
\end{center}
\end{table}

\subsection{Benchmarks for constrained POPs}
~\\

$\bullet$ The generalized Rosenbrock function
\begin{equation*}
    f_{\textrm{gR}}(\x)=1+\sum_{i=2}^n(100(x_i-x_{i-1}^2)^2+(1-x_i)^2).
\end{equation*}

$\bullet$ The Broyden tridiagonal function
\begin{align*}
    f_{\textrm{Bt}}(\x)=&((3-2x_1)x_1-2x_2+1)^2+\sum_{i=2}^{n-1}((3-2x_i)x_i-x_{i-1}-2x_{i+1}+1)^2\\&+((3-2x_n)x_n-x_{n-1}+1)^2.
\end{align*}

$\bullet$ The chained Wood function
\begin{align*}
    f_{\textrm{cW}}(\x)=&1+\sum_{i\in J}(100(x_{i+1}-x_{i}^2)^2+(1-x_i)^2+90(x_{i+3}-x_{i+2}^2)^2\\
    &+(1-x_{i+2})^2+10(x_{i+1}+x_{i+3}-2)^2+0.1(x_{i+1}-x_{i+3})^2),
\end{align*}
where $J=\{1,3,5,\ldots,n-3\}$ and $4|n$.

With the generalized Rosenbrock (resp. Broyden tridiagonal or chained Wood) function as the objective function, we consider the following constrained POP:
\begin{equation}\label{cons}
    \begin{cases}
    \inf&f_{\textrm{gR}}\quad(\textrm{resp. } f_{\textrm{Bt}}\textrm{ or } f_{\textrm{cW}})\\
    \textrm{s.t.}&1-(\sum_{i=20j-19}^{20j}x_i^2)\ge0,\quad j=1,2,\ldots,n/20,
    \end{cases}
\end{equation}
where $20|n$. The generalized Rosenbrock function, the Broyden tridiagonal function and the chained Wood function involve cliques with 2 or 3 variables, which can be efficiently handled by the CSSOS hierarchy; see \cite{waki}. For them, the CS-TSSOS hierarchy gives almost the same results with the CSSOS hierarchy. Hence we add the sphere constraints in \eqref{cons} to increase the clique size and to show the difference.

For these problems, the minimum relaxation order $d=2$ is used. As in the unconstrained case, we solve the CSSOS hierarchy $(\textrm{Q}^{\textrm{cs}}_{d})$ \eqref{sec2-eq13}, the TSSOS hierarchy $(\textrm{Q}^{\textrm{ts}}_{d,k})$ \eqref{sec2-eq16} with $k=1$ and the CS-TSSOS hierarchy $(\textrm{Q}^{\textrm{cs-ts}}_{d,k})$ \eqref{sec4-eq2} with $k=1$, and use approximately smallest chordal extensions. 
We also solve these POPs with the SDSOS approach. The results are displayed in Table \ref{gr}--\ref{cw}. 

\revision{From these tables, one can see that CS-TSSOS significantly reduces the maximal size of SDP blocks and is again the most efficient approach. For the generalized Rosenbrock function, CSSOS, TSSOS and CS-TSSOS give almost the same optimum while SDSOS gives a slightly loose lower bound (only for $n=40$); for the Broyden tridiagonal function, CSSOS, TSSOS and CS-TSSOS all give the same optimum while SDSOS gives a very loose lower bound (only for $n=40$); for the chained Wood function, CSSOS, TSSOS and CS-TSSOS all give the same optimum while SDSOS gives a slightly loose lower bound (only for $n=40$). Due to the limitation of memory, CSSOS scales up to $180$ varables; TSSOS scales up to $180$ or $200$ varables; SDSOS scales up to $40$ varables. On the other hand, CS-TSSOS can easily handle these instances with up to $1000$ variables.}

\begin{table}[htbp]
\caption{The result for the generalized Rosenbrock function ($d=2$)}\label{gr}
\begin{center}{\small
\begin{tabular}{|c|c|c|c|c|c|c|c|c|c|c|c|}
\hline
\multirow{2}*{var}&\multicolumn{3}{c|}{CSSOS}&\multicolumn{3}{c|}{TSSOS}&\multicolumn{3}{c|}{CS-TSSOS}&\multicolumn{2}{c|}{SDSOS}\\
\cline{2-12}
&mb&opt&time&mb&opt&time&mb&opt&time&opt&time\\
\hline
40&$231$&$38.051$&$126$&$41$&38.049&0.61&$21$&$38.049$&$0.23$&$37.625$&$115$\\
\hline
60&$231$&$57.849$&$232$&61&57.845&3.31&$21$&$57.845$&$0.32$&-&-\\
\hline
80&$231$&$77.647$&$306$&81&77.641&11.7&$21$&$77.641$&$0.41$&-&-\\
\hline
100&$231$&97.445&377&101&97.436&31.3&$21$&$97.436$&$0.54$&-&-\\
\hline
120&$231$&117.24&408&121&117.23&75.4&$21$&$117.23$&$0.60$&-&-\\
\hline
140&$231$&137.04&495&141&137.03&190&$21$&$137.03$&$0.75$&-&-\\
\hline
160&$231$&156.84&570&161&156.82&367&$21$&$156.82$&$0.90$&-&-\\
\hline
180&$231$&176.64&730&181&176.62&628&$21$&$176.62$&$1.09$&-&-\\
\hline
200&$231$&-&-&201&196.41&1327&$21$&$196.41$&$1.27$&-&-\\
\hline
300&$231$&-&-&-&-&-&$21$&$295.39$&$2.26$&-&-\\
\hline
400&$231$&-&-&-&-&-&$21$&$394.37$&$3.36$&-&-\\
\hline
500&$231$&-&-&-&-&-&$21$&$493.35$&$4.65$&-&-\\
\hline
1000&$231$&-&-&-&-&-&$21$&$988.24$&$15.8$&-&-\\
\hline
\end{tabular}}
\end{center}
\end{table}

\begin{table}[htbp]
\caption{The result for the Broyden tridiagonal function ($d=2$)}\label{bt}
\begin{center}{\small
\begin{tabular}{|c|c|c|c|c|c|c|c|c|c|c|c|}
\hline
\multirow{2}*{var}&\multicolumn{3}{c|}{CSSOS}&\multicolumn{3}{c|}{TSSOS }&\multicolumn{3}{c|}{CS-TSSOS}&\multicolumn{2}{c|}{SDSOS}\\
\cline{2-12}
&mb&opt&time&mb&opt&time&mb&opt&time&opt&time\\
\hline
40&$231$&$31.234$&$168$&43&31.234&1.95&$23$&$31.234$&$0.64$&$-5.8110$&$138$\\
\hline
60&$231$&$47.434$&$273$&63&47.434&8.33&$23$&$47.434$&$1.14$&-&-\\
\hline
80&$231$&$63.634$&$413$&83&63.634&33.9&$23$&$63.634$&$1.50$&-&-\\
\hline
100&$231$&79.834&519&103&79.834&104&$23$&$79.834$&$1.96$&-&-\\
\hline
120&$231$&96.034&671&123&96.034&199&$23$&$96.034$&$2.30$&-&-\\
\hline
140&$231$&112.23&872&143&112.23&490&$23$&$112.23$&$2.94$&-&-\\
\hline
160&$231$&128.43&1002&163&128.43&783&$23$&$128.43$&$3.67$&-&-\\
\hline
180&$231$&144.63&1066&183&144.63&1329&$23$&$144.63$&$4.46$&-&-\\
\hline
200&$231$&-&-&-&-&-&$23$&$160.83$&$4.88$&-&-\\
\hline
300&$231$&-&-&-&-&-&$23$&$241.83$&$8.67$&-&-\\
\hline
400&$231$&-&-&-&-&-&$23$&$322.83$&$13.3$&-&-\\
\hline
500&$231$&-&-&-&-&-&$23$&$403.83$&$19.9$&-&-\\
\hline
1000&$231$&-&-&-&-&-&$23$&$808.83$&$57.5$&-&-\\
\hline
\end{tabular}}
\end{center}
\end{table}

\begin{table}[htbp]
\caption{The result for the chained Wood function ($d=2$)}\label{cw}
\begin{center}{\small
\begin{tabular}{|c|c|c|c|c|c|c|c|c|c|c|c|}
\hline
\multirow{2}*{var}&\multicolumn{3}{c|}{CSSOS}&\multicolumn{3}{c|}{TSSOS}&\multicolumn{3}{c|}{CS-TSSOS}&\multicolumn{2}{c|}{SDSOS}\\
\cline{2-12}
&mb&opt&time&mb&opt&time&mb&opt&time&opt&time\\
\hline
40&$231$&$574.51$&$164$&41&574.51&0.81&$21$&$574.51$&$0.26$&$518.11$&$110$\\
\hline
60&$231$&$878.26$&$254$&61&878.26&3.61&$21$&$878.26$&$0.40$&-&-\\
\hline
80&$231$&$1182.0$&$393$&81&1182.0&15.3&$21$&$1182.0$&$0.57$&-&-\\
\hline
100&$231$&1485.8&505&101&1485.8&43.2&$21$&$1485.8$&$0.73$&-&-\\
\hline
120&$231$&1789.5&516&121&1789.5&88.4&$21$&$1789.5$&$0.93$&-&-\\
\hline
140&$231$&2093.3&606&141&2093.3&195&$21$&$2093.3$&$1.16$&-&-\\
\hline
160&$231$&2397.0&700&161&2397.0&403&$21$&$2397.0$&$1.39$&-&-\\
\hline
180&$231$&2700.8&797&181&2700.8&867&$21$&$2700.8$&$1.54$&-&-\\
\hline
200&$231$&-&-&201&3004.5&1238&$21$&$3004.5$&$1.91$&-&-\\
\hline
300&$231$&-&-&-&-&-&$21$&$4523.6$&$3.39$&-&-\\
\hline
400&$231$&-&-&-&-&-&$21$&$6042.0$&$5.72$&-&-\\
\hline
500&$231$&-&-&-&-&-&$21$&$7560.7$&$7.88$&-&-\\
\hline
1000&$231$&-&-&-&-&-&$21$&$15155$&$23.0$&-&-\\
\hline
\end{tabular}}
\end{center}
\end{table}

\subsection{The Max-Cut problem}
The Max-Cut problem is one of the basic combinatorial optimization problems, which is known to be NP-hard. Let $G(V, E)$ be an undirected graph with $V=\{1,\ldots,n\}$ and with edge weights $w_{ij}$ for $\{i,j\}\in E$. Then the Max-Cut problem for $G$ can be formulated as a QCQP in binary variables:
\begin{equation}\label{maxcut}
    \begin{cases}
    \inf&\frac{1}{2}\sum_{\{i,j\}\in E}w_{ij}(1-x_ix_j)\\
    \textrm{s.t.}&1-x_i^2=0,\quad i=1,\ldots,n.
    \end{cases}
\end{equation}
\revision{The property of binary variables in \eqref{maxcut} can be also exploited to reduce the size of SDPs arising in the moment-SOS hierarchy, which has been implemented in {\tt TSSOS}.}

For the numerical experiments, we construct random instances of Max-Cut problems with a block-band sparsity pattern (illustrated in Figure \ref{block-band}) which consists of $l$ blocks of size $b$ and two bands of width $h$. 
Here we select $b=25$ and $h=5$. 
For a given $l$, we generate a random sparse binary matrix $A\in\mathbf{S}^{lb+h}$ according to the block-arrow sparsity pattern: the entries out of the blue area take zero; the entries in the block area take one with probability $0.16$; the entries in the band area take one with probability $2/\sqrt{l}$. Then we construct the graph $G$ with $A$ as its adjacency matrix. 
For each edge $\{i,j\}\in E(G)$, the weight $w_{ij}$ randomly takes values $1$ or $-1$ with equal probability. Doing so, we build $10$ Max-Cut instances with $l=20,40,60,80,100,120,140,160,180,200$, respectively\footnote{The instances are available at https://wangjie212.github.io/jiewang/code.html.}. The largest number of nodes is $5005$.

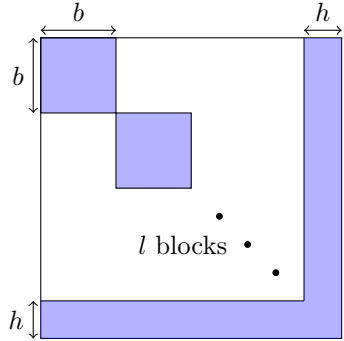
\begin{figure}[htbp]
\caption{The block-band sparsity pattern}\label{block-band}
\begin{center}
\begin{tikzpicture}
\draw (0,0) rectangle (4,4);
\filldraw[fill=blue, fill opacity=0.3] (0,3) rectangle (1,4);
\filldraw[fill=blue, fill opacity=0.3] (1,2) rectangle (2,3);
\fill (2.375,1.625) circle (0.3ex);
\fill (2.75,1.25) circle (0.3ex);
\fill (3.125,0.875) circle (0.3ex);
\draw (0,0.5)--(3.5,0.5);
\draw (3.5,4)--(3.5,0.5);
\fill[fill=blue, fill opacity=0.3] (0,0) rectangle (3.5,0.5);
\fill[fill=blue, fill opacity=0.3] (3.5,0) rectangle (4,4);
\draw[<->] (-0.1,0) -- (-0.1,0.5);
\node[left] at (-0.1,0.25) {$h$};
\draw[<->] (3.5,4.1) -- (4,4.1);
\node[above] at (3.75,4.1) {$h$};
\draw[<->] (-0.1,3) -- (-0.1,4);
\node[left] at (-0.1,3.5) {$b$};
\draw[<->] (0,4.1) -- (1,4.1);
\node[above] at (0.5,4.1) {$b$};
\node[left] at (2.6,1.25) {$l$ blocks};
\end{tikzpicture}\\
{\small $l$: the number of blocks; $b$: the size of blocks; $h$: the width of bands.}
\end{center}
\end{figure}

For each instance, we solve the first-order moment-SOS relaxation (Shor's relaxation), the CSSOS hierarchy with $d=2$, and the CS-TSSOS hierarchy with $d=2,k=1$ for which the maximal chordal extension is used. The results are displayed in Table \ref{max-cut}. From the table we can see that for each instance, both CSSOS and CS-TSSOS significantly improve the bound obtained by Shor's relaxation. Meanwhile, CS-TSSOS is several times faster than CSSOS at the cost of possibly providing a sightly weaker bound.

\begin{table}[htbp]
\caption{The result for Max-Cut instances}\label{max-cut}
\begin{center}
\begin{tabular}{|c|c|c|c|c|c|c|c|c|c|c|}
\hline
\multirow{2}*{instance}&\multirow{2}*{nodes}&\multirow{2}*{edges}&\multirow{2}*{mc}&Shor&\multicolumn{3}{c|}{CSSOS}&\multicolumn{3}{c|}{CS-TSSOS}\\
\cline{5-11}
&&&&opt&mb&opt&time&mb&opt&time\\
\hline
g20&$505$&$2045$&$14$&$570$&$120$&$488$&$51.2$&$92$&$488$&$19.6$\\
\hline
g40&$1005$&$3441$&$14$&$1032$&$120$&$885$&$134$&$92$&$893$&$41.1$\\
\hline
g60&$1505$&$4874$&$14$&$1439$&$120$&$1227$&$183$&$92$&$1247$&$71.3$\\
\hline
g80&$2005$&$6035$&$15$&$1899$&$136$&$1638$&$167$&$106$&$1669$&$84.8$\\
\hline
g100&$2505$&$7320$&$14$&$2398$&$120$&$2073$&$262$&$92$&$2128$&$112$\\
\hline
g120&$3005$&$8431$&$14$&$2731$&$120$&$2358$&$221$&$79$&$2443$&$127$\\
\hline
g140&$3505$&$9658$&$13$&$3115$&$105$&$2701$&$250$&$79$&$2812$&$153$\\
\hline
g160&$4005$&$10677$&$14$&$3670$&$120$&$3202$&$294$&$79$&$3404$&$166$\\
\hline
g180&$4505$&$12081$&$13$&$4054$&$105$&$3525$&$354$&$79$&$3666$&$246$\\
\hline
g200&$5005$&$13240$&$13$&$4584$&$105$&$4003$&$374$&$79$&$4218$&$262$\\
\hline
\end{tabular}\\
{\small In this table, only the integer part of optimal values is preserved.}
\end{center}
\end{table}

\subsection{The AC-OPF problem}
The AC optimal power flow (AC-OPF) is a central problem in power systems. It can be formulated as the following POP in complex variables $V_i,S_q^g,S_{ij}$:
\begin{equation}\label{opf}
\begin{cases}
\inf\limits_{V_i,S_q^g,S_{ij}}&\sum_{q\in G}(\mathbf{c}_{2q}(\Re(S_{q}^g))^2+\mathbf{c}_{1q}\Re(S_{q}^g)+\mathbf{c}_{0q})\\
\quad\,\textrm{s.t.}&\angle V_r=0,\\
&\mathbf{S}_{q}^{gl}\le S_{q}^{g}\le \mathbf{S}_{q}^{gu},\quad\forall q\in G,\\
&\boldsymbol{\upsilon}_{i}^l\le|V_i|\le \boldsymbol{\upsilon}_{i}^u,\quad\forall i\in N,\\
&\sum_{q\in G_i}S_q^g-\mathbf{S}_i^d-\mathbf{Y}_i^s|V_{i}|^2=\sum_{(i,j)\in E_i\cup E_i^R}S_{ij},\quad\forall i\in N,\\
&S_{ij}=(\mathbf{Y}_{ij}^*-\mathbf{i}\frac{\mathbf{b}_{ij}^c}{2})\frac{|V_i|^2}{|\mathbf{T}_{ij}|^2}-\mathbf{Y}_{ij}^*\frac{V_iV_j^*}{\mathbf{T}_{ij}},\quad\forall (i,j)\in E,\\
&S_{ji}=(\mathbf{Y}_{ij}^*-\mathbf{i}\frac{\mathbf{b}_{ij}^c}{2})|V_j|^2-\mathbf{Y}_{ij}^*\frac{V_i^*V_j}{\mathbf{T}_{ij}^*},\quad\forall (i,j)\in E,\\
&|S_{ij}|\le\mathbf{s}_{ij}^u,\quad\forall (i,j)\in E\cup E^R,\\
&\boldsymbol{\theta}_{ij}^{\Delta l}\le \angle (V_i V_j^*)\le \boldsymbol{\theta}_{ij}^{\Delta u},\quad\forall (i,j)\in E.\\
\end{cases}
\end{equation}
\revision{The meaning of the symbols in \eqref{opf} is as follows: $N$ - the set of buses, $G$ - the set of generators, $G_i$ - the set of generators connected to bus $i$, $E$ - the set of {\em from} branches, $E^R$ - the set of {\em to} branches, $E_i$ and $E^R_i$ - the subsets of branches that are incident to bus $i$, $\mathbf{i}$ - imaginary unit, $V_i$ - the voltage at bus $i$, $S_q^g$ - the power generation at generator $q$, $S_{ij}$ - the power flow from bus $i$ to bus $j$, $\Re(\cdot)$ - real part of a complex number, $\angle(\cdot)$ - angle of a complex number, $|\cdot|$ - magnitude of a complex number, $(\cdot)^*$ - conjugate of a complex number, $r$ - the voltage angle reference bus. All symbols in boldface are constants ($\mathbf{c}_{0q},\mathbf{c}_{1q},\mathbf{c}_{2q},\boldsymbol{\upsilon}_{i}^l,\boldsymbol{\upsilon}_{i}^u,\mathbf{s}_{ij}^u,\boldsymbol{\theta}_{ij}^{\Delta l},\boldsymbol{\theta}_{ij}^{\Delta u}\in\R$,$\mathbf{S}_{q}^{gl},\mathbf{S}_{q}^{gu},\mathbf{S}_i^d,\mathbf{Y}_i^s,\mathbf{Y}_{ij},\mathbf{b}_{ij}^c,\mathbf{T}_{ij}\in\CC$).} For a full description on the AC-OPF problem, the reader may refer to \cite{baba2019}. By introducing real variables for both real and imaginary parts of each complex variable, we can convert the AC-OPF problem to a POP involving only real variables\footnote{The expressions involving angles of complex variables can be converted to polynomials by using $\tan(\angle z)=y/x$ for $z=x+\mathbf{i}y\in\CC$.}.

To tackle an AC-OPF instance, we first compute a locally optimal solution with a local solver and then rely on an SDP relaxation to certify the global optimality. Suppose that the optimal value reported by the local solver is AC and the optimal value of the SDP relaxation is opt. The {\em optimality gap} between the locally optimal solution and the SDP relaxation is defined by
\begin{equation*}
    \textrm{gap}:=\frac{\textrm{AC}-\textrm{opt}}{\textrm{AC}}\times100\%.
\end{equation*}
If the optimality gap is less than $1.00\%$, then we accept the locally optimal solution as globally optimal. For many AC-OPF instances, the first-order moment-SOS relaxation (Shor's relaxation) is already able to certify the global optimality (with an optimality gap less than $1.00\%$). Therefore, we focus on the more challenging AC-OPF instances for which the optimality gap given by Shor's relaxation is greater than $1.00\%$. 
We select such instances from the AC-OPF library {\em \href{https://github.com/power-grid-lib/pglib-opf}{PGLiB}} \cite{baba2019}.
Since we shall go to the second-order moment-SOS relaxation, we can replace the variables $S_{ij}$ and $S_{ji}$ by their right-hand side expressions in \eqref{opf} and then convert the resulting problem to a POP involving real variables. 
The data for these selected AC-OPF instances are displayed in Table \ref{ac-opf1}, where the AC values are taken from {\em \href{https://github.com/power-grid-lib/pglib-opf}{PGLiB}}.

We solve Shor's relaxation, the CSSOS hierarchy with $d=2$ and the CS-TSSOS hierarchy with $d=2,k=1$ for these AC-OPF instances and the results are displayed in Table \ref{ac-opf1}--\ref{ac-opf2}. For instances 162\_ieee\_dtc, 162\_ieee\_dtc\_api, 500\_tamu, 1888\_rte, with maximal chordal extensions {\tt Mosek} ran out of memory and so we 
use approximately smallest chordal extensions. As the tables show,
CS-TSSOS is more efficient and scales much better with the problem size than CSSOS.
In particular, CS-TSSOS succeeds in reducing the optimality gap to less than $1.00\%$ for all instances.

\begin{table}[htbp]
\caption{The data for AC-OPF instances}\label{ac-opf1}
\begin{center}
\begin{tabular}{|c|c|c|c|c|c|c|}
\hline
\multirow{2}*{case name}&\multirow{2}*{var}&\multirow{2}*{cons}&\multirow{2}*{mc}&\multirow{2}*{AC}&\multicolumn{2}{c|}{Shor}\\
\cline{6-7}
&&&&&opt&gap\\
\hline
3\_lmbd\_api&$12$&$28$&$6$&$1.1242\text{e}4$&$1.0417\text{e}4$&$7.34\%$\\
\hline
5\_pjm&$20$&$55$&$6$&$1.7552\text{e}4$&$1.6634\text{e}4$&$5.22\%$\\
\hline
24\_ieee\_rts\_api&$114$&$315$&$10$&$1.3495\text{e}5$&$1.3216\text{e}5$&$2.06\%$\\
\hline
24\_ieee\_rts\_sad&$114$&$315$&$14$&$7.6943\text{e}4$&$7.3592\text{e}4$&$4.36\%$\\
\hline
30\_as\_api&$72$&$297$&$8$&$4.9962\text{e}3$&$4.9256\text{e}3$&$1.41\%$\\
\hline
73\_ieee\_rts\_api&$344$&$971$&$16$&$4.2263\text{e}5$&$4.1041\text{e}5$&$2.89\%$\\
\hline
73\_ieee\_rts\_sad&$344$&$971$&$16$&$2.2775\text{e}5$&$2.2148\text{e}5$&$2.75\%$\\
\hline
% 118\_ieee\_api&$344$&$1325$&$21$&$2.4205\text{e}5$&$2.1504\text{e}5$&$11.16\%$\\
% \hline
% 118\_ieee\_sad&$344$&$1325$&$21$&$1.0522\text{e}5$&$1.0181\text{e}5$&$3.24\%$\\
% \hline
162\_ieee\_dtc&$348$&$1809$&$21$&$1.0808\text{e}5$&$1.0616\text{e}5$&$1.78\%$\\
\hline
162\_ieee\_dtc\_api&$348$&$1809$&$21$&$1.2100\text{e}5$&$1.1928\text{e}5$&$1.42\%$\\
\hline
240\_pserc&$766$&$3322$&$16$&$3.3297\text{e}6$&$3.2818\text{e}6$&$1.44\%$\\
\hline
500\_tamu\_api&$1112$&$4613$&$20$&$4.2776\text{e}4$&$4.2286\text{e}4$&$1.14\%$\\
\hline
500\_tamu&$1112$&$4613$&$30$&$7.2578\text{e}4$&$7.1034\text{e}4$&$2.12\%$\\
\hline
793\_goc&$1780$&$7019$&$18$&$2.6020\text{e}5$&$2.5636\text{e}5$&$1.47\%$\\
\hline
1888\_rte&$4356$&$18257$&$26$&$1.4025\text{e}6$&$1.3748\text{e}6$&$1.97\%$\\
\hline
3022\_goc&$6698$&$29283$&$50$&$6.0143\text{e}5$&$5.9278\text{e}5$&$1.44\%$\\
\hline
\end{tabular}
\end{center}
\end{table}

\begin{table}[htbp]
\caption{The result for AC-OPF instances}\label{ac-opf2}
\begin{center}{\small
\begin{tabular}{|c|c|c|c|c|c|c|c|c|c|}
\hline
\multirow{2}*{case name}&\multicolumn{4}{c|}{CSSOS}&\multicolumn{5}{c|}{CS-TSSOS}\\
\cline{2-10}
&mb&opt&time&gap&mb&opt&time&gap&CE\\
\hline
3\_lmbd\_api&$28$&$1.1242\text{e}4$&$0.21$&$0.00\%$&$22$&$1.1242\text{e}4$&$0.09$&$0.00\%$&max\\
\hline
5\_pjm&$28$&$1.7543\text{e}4$&$0.56$&$0.05\%$&$22$&$1.7543\text{e}4$&$0.30$&$0.05\%$&max\\
\hline
24\_ieee\_rts\_api&$66$&$1.3442\text{e}5$&$5.59$&$0.39\%$&$31$&$1.3396\text{e}5$&$2.01$&$0.73\%$&max\\
\hline
24\_ieee\_rts\_sad&$120$&$7.6943\text{e}4$&$94.9$&$0.00\%$&$39$&$7.6942\text{e}4$&$14.8$&$0.00\%$&max\\
\hline
30\_as\_api&$45$&$4.9927\text{e}3$&$4.43$&$0.07\%$&$22$&$4.9920\text{e}3$&$2.69$&$0.08\%$&max\\
\hline
73\_ieee\_rts\_api&$153$&$4.2246\text{e}5$&$758$&$0.04\%$&$44$&$4.2072\text{e}5$&$96.0$&$0.45\%$&max\\
\hline
73\_ieee\_rts\_sad&$153$&$2.2775\text{e}5$&$504$&$0.00\%$&$44$&$2.2766\text{e}5$&$71.5$&$0.04\%$&max\\
\hline
% 118\_ieee\_api&$253$&$-$&$-$&$-$&$31$&$2.4180\text{e}5$&$82.7$&$0.11\%$&min\\
% \hline
% 118\_ieee\_sad&$253$&$-$&$-$&$-$&$73$&$1.0470\text{e}5$&$169$&$0.50\%$&max\\
% \hline
162\_ieee\_dtc&$253$&$-$&$-$&$-$&$34$&$1.0802\text{e}5$&$278$&$0.05\%$&min\\
\hline
162\_ieee\_dtc\_api&$253$&$-$&$-$&$-$&$34$&$1.2096\text{e}5$&$201$&$0.03\%$&min\\
\hline
240\_pserc&$153$&$3.3072\text{e}6$&$585$&$0.68\%$&$44$&$3.3042\text{e}6$&$33.9$&$0.77\%$&max\\
\hline
500\_tamu\_api&$231$&$4.2413\text{e}4$&$3114$&$0.85\%$&$39$&$4.2408\text{e}4$&$46.6$&$0.86\%$&max\\
\hline
500\_tamu&$496$&$-$&$-$&$-$&$31$&$7.2396\text{e}4$&$410$&$0.25\%$&min\\
\hline
793\_goc&$190$&$2.5938\text{e}5$&$563$&$0.31\%$&$33$&$2.5932\text{e}5$&$66.1$&$0.34\%$&max\\
\hline
1888\_rte&$378$&$-$&$-$&$-$&$27$&$1.3953\text{e}6$&$934$&$0.51\%$&min\\
\hline
3022\_goc&$1326$&$-$&$-$&$-$&$76$&$5.9858\text{e}5$&$1886$&$0.47\%$&max\\
\hline
\end{tabular}}
\end{center}
\end{table}

\section{Discussion and conclusions}\label{conc}

This paper introduces the CS-TSSOS hierarchy, a sparse variant of the moment-SOS hierarchy, which can be used to solve large-scale real-world nonlinear optimization problems, assuming that the input data are sparse polynomials. 
In addition to its theoretical convergence guarantees, CS-TSSOS allows one to make a trade-off between the quality of optimal values and the computational efficiency by controlling the types of chordal extensions and the sparse order $k$. 

By fully exploiting sparsity, CS-TSSOS allows one to go beyond Shor's relaxation and solve the second-order moment-SOS relaxation associated with large-scale POPs to obtain more accurate bounds. Indeed CS-TSSOS can handle second-order relaxations of POP instances with thousands of variables and constraints on a standard laptop in tens of minutes. Such instances include the optimal power flow (OPF) problem, an important challenge in the management of electricity networks. In particular, our plan
%one of the key industrial problems of the 21st century. 
is to perform advanced numerical experiments on HPC cluster, for OPF instances with larger numbers of buses \cite{eltved2019robustness}.

This work suggests additional investigation tracks for further research:

1) The standard procedure of extracting optimal solutions for the dense moment-SOS hierarchy does not apply to the CS-TSSOS hierarchy. It would be interesting to develop a procedure for extracting (approximate) solutions from partial information of moment matrices.

2) Recall that chordal extension plays an important role for both correlative and term sparsity patterns. It turns out that the size of the resulting maximal cliques 
%obtained from chordal extensions exhibit a crucial influence 
is crucial for the overall computational efficiency of the CS-TSSOS hierarchy. So far, we have only considered \emph{maximal} chordal extensions (for convergence guarantee) and approximately \emph{smallest} chordal extensions. It would be worth investigating more general choices of chordal extensions.

3) The CS-TSSOS strategy could be adapted to other applications involving sparse polynomial problems, including deep learning  \cite{chen2020polynomial} or noncommutative optimization problems  \cite{klep2019sparse} arising in quantum information.

4) At last but not least, a challenging research issue is 
to establish serious computationally cheaper alternatives to interior-point methods for solving SDP relaxations of POPs. The recent work \cite{yurtsever2021scalable} which reports spectacular results for standard SDPs (and Max-Cut problems in particular) is a positive sign in this direction.

~

\paragraph{\textbf{Acknowledgements}.} 
We would like to thank Tillmann Weisser for helpful discussions on OPF problems.
The first and second authors were supported by the Tremplin ERC Stg Grant ANR-18-ERC2-0004-01 (T-COPS project).
The second author was supported by the FMJH Program PGMO (EPICS project) and  EDF, Thales, Orange et Criteo.
This work has benefited from  the European Union's Horizon 2020 research and innovation programme under the Marie Sklodowska-Curie Actions, grant agreement 813211 (POEMA) as well as from the AI Interdisciplinary Institute ANITI funding, through the French ``Investing for the Future PIA3'' program under the Grant agreement n$^{\circ}$ANR-19-PI3A-0004.
The third author was also supported by the European Research Council (ERC) under the European's Union Horizon 2020 research and innovation program (grant agreement 666981 TAMING).

\bibliographystyle{alpha}
\bibliography{references}
\end{document}